\documentclass[11pt, a4paper, leqno]{amsart}

\usepackage[utf8]{inputenc}
\usepackage[T1]{fontenc}
\usepackage[english]{babel}
\usepackage{csquotes}

\usepackage[margin=1in]{geometry}
\usepackage{microtype}         
\usepackage{xcolor}            

\definecolor{color1}{HTML}{4169e1}
\definecolor{color2}{HTML}{B8B8B8}

\usepackage{enumitem}          
\setlist[enumerate,1]{label=(\arabic*)} 

\usepackage{graphicx}          
\usepackage{booktabs}          

\usepackage{etoolbox}          
\usepackage{chngcntr}          

\usepackage{mathtools}         
\usepackage{amssymb}           
\usepackage{bm}                
\usepackage{bbm}
\usepackage{mleftright}        
\mleftright

\usepackage{mathrsfs}          
\usepackage{upgreek}           

\usepackage{tikz}              
\usepackage{quiver}            

\newtheorem{theorem}{Theorem}[section]
\newtheorem{lemma}[theorem]{Lemma}
\newtheorem{proposition}[theorem]{Proposition}
\newtheorem{corollary}[theorem]{Corollary}

\theoremstyle{plain}
\newtheorem*{theoremA}{Theorem A}
\newtheorem*{corollaryB}{Corollary B}
\newtheorem*{theoremC}{Theorem C}
\newtheorem*{corollaryD}{Corollary D}
\newtheorem*{theoremE}{Theorem E}

\theoremstyle{definition}
\newtheorem{definition}[theorem]{Definition}

\newtheorem{remark}[theorem]{Remark} 

\theoremstyle{remark}

\counterwithout{equation}{section}

\mathtoolsset{showonlyrefs,showmanualtags}

\usepackage[colorlinks=true,
            linkcolor=color1,
            citecolor=color1,
            urlcolor=color2,
            pagebackref=true,
            unicode=true]{hyperref}

\usepackage[capitalise, nameinlink]{cleveref}

\crefname{theorem}{Theorem}{Theorems}
\crefname{lemma}{Lemma}{Lemmas}
\crefname{proposition}{Proposition}{Propositions}
\crefname{corollary}{Corollary}{Corollaries}
\crefname{conjecture}{Conjecture}{Conjectures}
\crefname{definition}{Definition}{Definitions}
\crefname{example}{Example}{Examples}
\crefname{problem}{Problem}{Problems}
\crefname{exercise}{Exercise}{Exercises}
\crefname{question}{Question}{Questions}
\crefname{remark}{Remark}{Remarks}
\crefname{notation}{Notation}{Notations}
\crefname{convention}{Convention}{Conventions}

\crefformat{equation}{(#2#1#3)}
\crefmultiformat{equation}{(#2#1#3)}{ and (#2#1#3)}{, (#2#1#3)}{, and (#2#1#3)}
\crefrangeformat{equation}{(#3#1#4)--(#5#2#6)}

\makeatletter
\newcommand\mkcal@one[1]{\expandafter\gdef\csname c#1\endcsname{\ensuremath{\mathcal{#1}}}}
\forcsvlist{\mkcal@one}{A,B,C,D,E,F,G,H,I,J,K,L,M,N,O,P,Q,R,S,T,U,V,W,X,Y,Z}

\newcommand\mksf@one[1]{\expandafter\gdef\csname s#1\endcsname{\ensuremath{\mathsf{#1}}}}
\forcsvlist{\mksf@one}{A,B,C,D,E,F,G,H,I,J,K,L,M,N,O,P,Q,R,S,T,U,V,W,X,Y,Z}

\newcommand\mkbb@one[1]{\expandafter\gdef\csname #1#1\endcsname{\ensuremath{\mathbb{#1}}}}
\forcsvlist{\mkbb@one}{A,B,C,D,E,F,G,H,I,J,K,L,M,N,O,P,Q,R,S,T,U,V,W,X,Y,Z}

\newcommand\mkbf@one[1]{\expandafter\gdef\csname b#1\endcsname{\ensuremath{\mathbf{#1}}}}
\forcsvlist{\mkbf@one}{A,B,C,D,E,F,G,H,I,J,K,L,M,N,O,P,Q,R,S,T,U,V,W,X,Y,Z}

\newcommand\mkrm@one[1]{\expandafter\gdef\csname r#1\endcsname{\ensuremath{\mathrm{#1}}}}
\forcsvlist{\mkrm@one}{A,B,C,D,E,F,G,H,I,J,K,L,M,N,O,P,Q,R,S,T,U,V,W,X,Y,Z}

\newcommand\mkfrak@one[1]{\expandafter\gdef\csname f#1\endcsname{\ensuremath{\mathfrak{#1}}}}
\forcsvlist{\mkfrak@one}{A,B,C,D,E,F,G,H,I,J,K,L,M,N,O,P,Q,R,S,T,U,V,W,X,Y,Z}
\makeatother

\DeclareMathOperator{\Db}{D^b}
\DeclareMathOperator{\Dperf}{D_{\mathrm{perf}}}

\DeclareMathOperator{\Stab}{Stab}

\newcommand{\K}{\rK}

\title[A remark on the full support property]{A remark on the full support property}
\author{Yiran Cheng}
\address{Department of Mathematics, Imperial College London, London SW7 2AZ, United Kingdom}
\email{y.cheng@imperial.ac.uk}

\begin{document}

\begin{abstract}
We show that a mass-Hom bound for a numerical pre-stability condition on a projective scheme over a field implies the support property with respect to the full numerical Grothendieck group.
Combined with the recent construction of stability conditions on projective schemes, this yields stability conditions with full support property on every projective scheme over a field.
We also establish a uniform version in the relative setting.
Finally, we show that the distinguished component in the full numerical stability manifold is independent of the choice of polarization, thereby defining a canonical component.
\end{abstract}

\maketitle

\section{Introduction}

In \cite[Theorem~1.1 and Remark~1.2]{LLL+}, Li--Liu--Liu--Macrì--Perry--Stellari--Zhao construct stability conditions on projective schemes over a field and, more generally, on projective families, and define distinguished connected components containing them.
They further prove that every stability condition in such a distinguished component admits a mass-Hom bound \cite[Theorem~1.3]{LLL+}.

Our first result shows that a mass-Hom bound already implies the support property with respect to the full numerical Grothendieck group.

\begin{theoremA}
[\textnormal{= Theorem~\ref{thm:mass-hom-implies-support}}]
Let $X$ be a projective scheme over a field $\mathbbm{k}$, and let $\sigma=(Z,\mathcal P)$ be a numerical pre-stability condition on $\Db(X)$.
If $\sigma$ admits a mass-Hom bound, then it satisfies the support property with respect to the full numerical Grothendieck group $\K_{\mathrm{num}}(X)$.
\end{theoremA}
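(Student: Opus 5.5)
The plan is to produce an explicit norm on $\K_{\mathrm{num}}(X)\otimes\mathbb R$ from finitely many Euler pairings, and to bound each pairing on semistable objects by the mass-Hom bound. I take the mass-Hom bound in the form used in \cite{LLL+}: for every perfect complex $F\in\Dperf(X)$ there is a constant $C_F>0$ such that
\[
\sum_{i\in\mathbb Z}\dim_{\mathbbm k}\operatorname{Hom}(F,E[i])\le C_F\, m_\sigma(E)\quad\text{for all } E\in\Db(X).
\]
Here $m_\sigma(E)=\sum_j |Z(A_j)|$ is the mass over the HN factors $A_j$ of $E$. If the bound is only stated for $F$ in some generating set (for example $\mathcal O(-n)$ for $n\gg0$, or a fixed ample sequence), I will restrict the $F_i$ chosen below to that set. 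The left-hand side is finite because $X$ is projective, $F$ is perfect and $E$ is bounded coherent.

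First I recall that $\K_{\mathrm{num}}(X)$ is $K_0(\Db(X))$ modulo the kernel of the pairing $(F,E)\mapsto\chi(F,E)=\sum_i(-1)^i\dim\operatorname{Hom}(F,E[i])$ with $F$ ranging over $\Dperf(X)$. Thus $\K_{\mathrm{num}}(X)$ embeds into $\operatorname{Hom}(K_0(\Dperf(X)),\mathbb Z)$. I will use (or cite) that $\K_{\mathrm{num}}(X)$ is a free abelian group of finite rank; this is the standing hypothesis under which numerical stability conditions are defined.

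By finite rank, the functionals $\chi(F,-)$ on $V:=\K_{\mathrm{num}}(X)\otimes\mathbb R$ span a finite-dimensional subspace of $V^\vee$. Their common kernel is zero by the definition of numerical equivalence, so the subspace is all of $V^\vee$. Hence I can choose finitely many perfect complexes $F_1,\dots,F_n$ (from the allowed generating set if necessary) with $\chi(F_1,-),\dots,\chi(F_n,-)$ spanning $V^\vee$. Then
\[
\|v\|:=\sum_{k=1}^n|\chi(F_k,v)|
\]
is a norm on $V$. This choice of test objects, and checking that the mass-Hom bound as stated in the paper applies to them, is the step I expect to need the most care. One concern is whether restricting to a generating class such as twists of an ample line bundle still spans the dual; it does, since these classes generate $K_0(\Dperf(X))$.

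Now let $E$ be $\sigma$-semistable. Then $m_\sigma(E)=|Z(E)|$, and for each $k$,
\[
|\chi(F_k,E)|\le\sum_i\dim\operatorname{Hom}(F_k,E[i])\le C_{F_k}|Z(E)|.
\]
Summing over $k$ gives $\|v(E)\|\le\big(\sum_k C_{F_k}\big)|Z(E)|$. This is exactly the support property with respect to $v\colon K_0(\Db(X))\to\K_{\mathrm{num}}(X)$, since all norms on the finite-dimensional space $V$ are equivalent. Because $\sigma$ is numerical, $Z$ factors through $\K_{\mathrm{num}}(X)$, so $\sigma$ satisfies the support property with respect to the full numerical Grothendieck group.
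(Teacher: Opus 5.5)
Your overall frame matches the paper's: finitely many perfect $F_k$ whose Euler pairings define a norm on $\K_{\mathrm{num}}(X)_{\mathbb R}$, then a bound on each $|\chi(F_k,E)|$ for semistable $E$. However, you have assumed away the one step with real content. The mass-Hom bound in the statement is the single-degree bound $\dim_{\mathbbm k}\operatorname{Hom}(A,F)\le C_A\,m_\sigma(F)$ for $A\in\Dperf(X)$. It is not the bound on the total graded dimension $\sum_i\dim_{\mathbbm k}\operatorname{Hom}(A,F[i])$ that you take as hypothesis. From the single-degree bound you only get $\dim\operatorname{Hom}(F_k,E[i])\le C_{F_k[-i]}\,|Z(E)|$, with constants that may grow with $i$. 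So to control $\chi(F_k,E)$ you need that, for semistable $E$, only a finite set of degrees $i$, \emph{independent of $E$}, can contribute. Your remark that the graded sum is finite for each fixed $E$ does not give this uniformity.

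The missing argument is the core of the paper's proof. By shift invariance take $E\in\mathcal P(\phi)$ with $0<\phi\le 1$.
\begin{itemize}
\item Lower bound: if $\operatorname{Hom}(F_k,E[i])\neq0$ then $\phi^-(F_k)\le\phi+i$, so $i\ge\phi^-(F_k)-1$.
\item Upper bound: use Grothendieck duality for $X\to\operatorname{Spec}\mathbbm k$, namely $\operatorname{Hom}(F_k,E[i])^\vee\simeq\operatorname{Hom}\bigl(E,F_k\otimes\omega_X^\bullet[-i]\bigr)$. Here $F_k\otimes\omega_X^\bullet\in\Db(X)$ because $F_k$ is perfect. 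Nonvanishing forces $i<\phi^+(F_k\otimes\omega_X^\bullet)$.
\end{itemize}
Thus only $i\in J_k\coloneqq[\phi^-(F_k)-1,\phi^+(F_k\otimes\omega_X^\bullet))\cap\mathbb Z$ contribute, and $|\chi(F_k,E)|\le\bigl(\sum_{i\in J_k}C_{F_k[-i]}\bigr)|Z(E)|$. After this your argument goes through unchanged.

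Separately, your hedge about restricting to twists of a single ample line bundle is unnecessary, since the bound is assumed for every perfect $A$. Its justification is also wrong. The classes $[\mathcal O(-n)]$ need not generate $K_0(\Dperf(X))$, and the pairings $\chi(\mathcal O(-n),-)$ only record the Hilbert polynomial, which does not determine the numerical class. For example, on $\PP^1\times\PP^1$ with $\mathcal O(1,1)$ they span only a $3$-dimensional subspace of the $4$-dimensional dual.
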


\noindent
For an ample numerical class $H$ on $X$, write $\Stab_H^\dagger(\Db(X))$ for the distinguished component defined in \cite[Theorem~1.1 and Remark~1.2]{LLL+}.
Combining this result with \cite[Theorem~1.3]{LLL+} gives the following corollary, which answers the question about the full support property raised in \cite[Section~1.7]{LLL+}.

\begin{corollaryB}
[\textnormal{= Corollary~\ref{cor:absolute-full-support}}]
Every stability condition in $\Stab_H^\dagger(\Db(X))$ satisfies the support property with respect to $\K_{\mathrm{num}}(X)$.
In particular, every projective scheme over a field admits stability conditions satisfying the support property with respect to its numerical Grothendieck group.
\end{corollaryB}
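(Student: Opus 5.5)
The plan is to derive the corollary formally from Theorem~\ref{thm:mass-hom-implies-support} together with \cite[Theorems~1.1 and~1.3]{LLL+}. The real content is in Theorem~A; what remains is to check that its hypotheses hold for every $\sigma$ in the distinguished component.

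First, I will fix $\sigma=(Z,\mathcal P)\in\Stab_H^\dagger(\Db(X))$ and check that it is a numerical pre-stability condition in the sense of Theorem~A. By construction in \cite{LLL+}, $\sigma$ is a stability condition with respect to some lattice $\Lambda_H$ via a homomorphism $v\colon \K(X)\to\Lambda_H$. The point to verify is that $v$ factors through the numerical Grothendieck group $\K_{\mathrm{num}}(X)$. I expect this to follow from the definition of $v$ as a degree/Chern-character type map built from intersections with powers of $H$. Granting this, $Z=Z'\circ v$ factors through $\K_{\mathrm{num}}(X)$. Moreover $\mathcal P$ is a slicing compatible with $Z$, and it satisfies the Harder--Narasimhan property; local finiteness follows from the support property with respect to $\Lambda_H$. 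So $\sigma$ is a numerical pre-stability condition. The main potential obstacle in the whole argument is precisely this factorization. If \cite{LLL+} phrases the component via a lattice not obviously a quotient of $\K_{\mathrm{num}}(X)$, I would argue directly: the central charges there are given by numerical invariants, so they kill numerically trivial classes. Deformations inside the component keep $Z$ in $\operatorname{Hom}(\Lambda_H,\mathbb C)$, hence still numerical.

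Second, I will invoke \cite[Theorem~1.3]{LLL+}, which says that every $\sigma$ in $\Stab_H^\dagger(\Db(X))$ admits a mass-Hom bound. I need to check that the notion of mass-Hom bound there matches the one used in Theorem~\ref{thm:mass-hom-implies-support}, i.e.\ a bound of $\dim\operatorname{Hom}$ (or $\hom^\bullet$) of objects in terms of their masses. Theorem~\ref{thm:mass-hom-implies-support} then yields the support property with respect to $\K_{\mathrm{num}}(X)$. This proves the first sentence.

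For the final claim, \cite[Theorem~1.1 and Remark~1.2]{LLL+} guarantees that $\Stab_H^\dagger(\Db(X))$ is nonempty for any projective $X$ over $\mathbbm{k}$ and any ample class $H$, and such an $H$ always exists. Any element of this component therefore gives a stability condition on $\Db(X)$ with the support property with respect to $\K_{\mathrm{num}}(X)$.
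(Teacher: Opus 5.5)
Your proposal is correct and follows the paper's argument: the paper likewise obtains the corollary by combining the mass-Hom bound for every stability condition in $\Stab_H^\dagger(\Db(X))$ from \cite[Theorem~7.5]{LLL+} with Theorem~\ref{thm:mass-hom-implies-support}. Your extra check that $Z$ factors through $\K_{\mathrm{num}}(X)$ is something the paper simply builds into its setup, where the central charge is the composite of the one on $\Lambda$ with $v\colon\K_{\mathrm{num}}(X)\to\Lambda$; the final sentence then follows from the nonemptiness of the component given by \cite[Theorem~1.1]{LLL+}, exactly as you say.
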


The same argument admits a uniform relative version.
We formulate it using the uniformly numerical relative Grothendieck group $\mathcal N(X/S)$ of \cite[Proposition and Definition~21.5]{BLM+21} and introduce the notion of a \emph{uniform mass-Hom bound} in Definition~\ref{def:uniform-mass-hom}.

\begin{theoremC}
[\textnormal{= Theorem~\ref{thm:relative-mass-hom-support}}]
Let $\pi\colon X\to S$ be as in \cite[Setup~8.1]{LLL+}, and let $\sigma$ be a stability condition on $\Db(X)$ over $S$ whose central charge factors through $\mathcal N(X/S)$.
Assume that $\mathcal N(X/S)$ is a finite-rank free abelian group.
If $\sigma$ admits a uniform mass-Hom bound, then it satisfies the support property with respect to $\mathcal N(X/S)$ uniformly over $S$.
\end{theoremC}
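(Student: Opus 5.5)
The plan is to reduce the support property to a bound on finitely many Euler pairings, following the absolute case (Theorem~A) but tracking constants uniformly over the points of $S$. Recall that the support property with respect to $\mathcal N(X/S)$, uniformly over $S$, asks for a norm $\|\cdot\|$ on $\mathcal N(X/S)\otimes\mathbb R$ and a constant $C>0$ with
\[
\|v(E)\|\le C\,|Z(E)|
\]
for every $s\in S$ and every $\sigma_s$-semistable object $E$ on the fibre $X_s$. Here $v$ is the class map to $\mathcal N(X/S)$. Since $\mathcal N(X/S)$ is free of finite rank, all norms are equivalent, so I am free to choose a convenient one.

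\textbf{Step 1: test objects.} I will use the construction of $\mathcal N(X/S)$ in \cite[Proposition and Definition~21.5]{BLM+21}. There, a class is detected by Euler characteristics of fibrewise objects against restrictions of perfect complexes (for instance twists of a relatively ample line bundle, or vector bundles) defined on all of $X$. Because $\mathcal N(X/S)\otimes\mathbb Q$ is finite-dimensional, I can choose finitely many perfect objects $G_1,\dots,G_n\in\Dperf(X)$ such that the functionals
\[
v\mapsto \chi(G_i|_{X_s},\,-)
\]
span the dual of $\mathcal N(X/S)\otimes\mathbb Q$. These functionals are independent of $s$ by the very definition of uniform numerical equivalence. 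I then define the norm by
\[
\|v\|:=\max_i |\chi(G_i,v)|.
\]
This is a genuine norm, since the functionals separate points.

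\textbf{Step 2: bounding the pairings.} For a $\sigma_s$-semistable $E$ I estimate
\[
|\chi(G_i|_{X_s},E)|\le\sum_k \hom(G_i|_{X_s},E[k]).
\]
Decompose $G_i|_{X_s}$ into its $\sigma_s$-HN factors $A_{ij}$. By long exact sequences, the right-hand side is at most $\sum_{j,k}\hom(A_{ij},E[k])$. The uniform mass-Hom bound of Definition~\ref{def:uniform-mass-hom} should give, for semistable $A$ and $E$, a bound of the shape
\[
\sum_k\hom(A,E[k])\le C_0\, m(A)\,m(E),
\]
with $C_0$ independent of $s$. Summing over the factors gives
\[
|\chi(G_i,E)|\le C_0\, m_{\sigma_s}(G_i|_{X_s})\,|Z(E)|.
\]
This is the same chain of inequalities as in the proof of Theorem~A.

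\textbf{Step 3 (main obstacle): uniform masses.} The crux is uniformity: I need $\sup_{s\in S} m_{\sigma_s}(G_i|_{X_s})<\infty$ for each of the finitely many $G_i$. The first thing I will try is to argue that this is part of, or follows from, the uniform mass-Hom bound together with the openness/boundedness properties of HN filtrations for stability conditions over $S$ \cite[Setup~8.1]{LLL+}. Concretely, the HN filtration of $G_i$ over generic points of a stratification of $S$ specializes, and the mass is bounded on each of finitely many strata, using Noetherianity of $S$. If that fails, I will replace the $G_i$ by objects chosen so that their restrictions are semistable of uniformly bounded phase-and-charge; for example, choose sufficiently negative twists so that they lie in a fixed heart on all fibres, with $Z$ determined by the class in $\mathcal N(X/S)$ and hence independent of $s$.

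\textbf{Conclusion.} Once Step~3 is in place, set $C:=C_0\max_i\sup_s m(G_i|_{X_s})$. Then $\|v(E)\|\le C|Z(E)|$ for every $s\in S$ and every $\sigma_s$-semistable $E$, which is the uniform support property with respect to $\mathcal N(X/S)$.
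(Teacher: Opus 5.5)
There is a genuine gap in Step~2. The uniform mass-Hom bound of Definition~\ref{def:uniform-mass-hom} only controls Hom spaces out of restrictions $A_s$ of a \emph{fixed} perfect object $A\in\Dperf(X)$ on the total space, with a constant $C_A$ that depends on $A$ in an uncontrolled way. The $\sigma_s$-HN factors $A_{ij}$ of $G_i|_{X_s}$ are arbitrary objects of $\Db(X_s)$. They are typically neither restrictions of fixed objects on $X$ nor perfect. So the bilinear estimate $\sum_k\dim\operatorname{Hom}(A,E[k])\le C_0\,m(A)\,m(E)$ is not supplied by the hypothesis.

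In this generality that estimate is actually false. Suppose a fibre has a singular point $x$ with $\mathcal O_x$ $\sigma_s$-semistable, as for the stability conditions of \cite{LLL+}. Then $\operatorname{Ext}^k(\mathcal O_x,\mathcal O_x)\neq0$ for every $k\geq0$, so the left-hand side is infinite for $A=E=\mathcal O_x$. Decomposing $G_i|_{X_s}$ into HN factors also throws away the one thing that made $\sum_k$ finite, namely perfectness of $G_i|_{X_s}$. So Step~3, uniform bounds on $m_{\sigma_s}(G_i|_{X_s})$, targets the wrong quantity: even granting it, the argument does not close. Your fallback of putting the $G_i$ in a fixed heart does not help either, since a heart gives no vanishing of $\operatorname{Hom}(G_i|_{X_s},E[k])$ for large $k$. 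This is also not the chain of inequalities used for Theorem~A.

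The missing idea is to apply the hypothesis directly to the shifted perfect objects $G_i[-k]$. This gives $\dim\operatorname{Hom}(G_i|_{X_s},E[k])\le C_{G_i[-k]}\,|Z(E)|$ for semistable $E$, uniformly in $s$. It then remains to show that only a finite set of $k$, independent of $s$ and $E$, can contribute to $\chi(G_i,E)$.

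Take $E$ semistable of phase in $(0,1]$. The lower bound $k\geq\phi^-_{\sigma_s}(G_i|_{X_s})-1$ is immediate. For the upper bound, use fibrewise Grothendieck duality with the relative dualizing complex $\omega^\bullet_{X/S}$ and its base change. This identifies $\operatorname{Hom}(G_i|_{X_s},E[k])^\vee$ with $\operatorname{Hom}\bigl(E,(G_i\otimes\omega^\bullet_{X/S})_s[-k]\bigr)$, which forces $k<\phi^+_{\sigma_s}\bigl((G_i\otimes\omega^\bullet_{X/S})_s\bigr)$.

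So what must be uniform over $S$ is not the mass but the phases: $\phi^-$ of $(G_i)_s$ and $\phi^+$ of $(G_i\otimes\omega^\bullet_{X/S})_s$. Both bounds follow from constructibility and semicontinuity of phase functions of $S$-perfect objects \cite[Lemma~8.21]{LLL+}, together with quasi-compactness of $S$. Summing $C_{G_i[-k]}$ over the resulting finite, $s$-independent set of $k$ gives the uniform constant.
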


\noindent
We moreover show that the distinguished relative component $\Stab^{\dagger}_{H_{X/S}}(\Db(X)/S)$ of \cite[Theorem~1.1]{LLL+} admits a uniform mass-Hom bound.
This gives the following.

\begin{corollaryD}
[\textnormal{= Corollary~\ref{cor:relative-full-support}}]
Let $\pi\colon X\to S$ be as in \cite[Setup~8.1]{LLL+}, and assume that $S$ is quasi-projective over a field.
Then every stability condition in the distinguished relative component $\Stab^{\dagger}_{H_{X/S}}(\Db(X)/S)$ satisfies the support property with respect to the uniformly numerical relative Grothendieck group $\mathcal N(X/S)$.
\end{corollaryD}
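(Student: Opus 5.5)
The plan is to reduce Corollary~D to Theorem~C. That requires two inputs: that $\mathcal N(X/S)$ is a finite-rank free abelian group, and that every $\sigma\in\Stab^{\dagger}_{H_{X/S}}(\Db(X)/S)$ admits a uniform mass-Hom bound in the sense of Definition~\ref{def:uniform-mass-hom}. The central charges of points of the distinguished component factor through $\mathcal N(X/S)$ by construction in \cite{LLL+}, so once these two inputs are in place Theorem~C applies verbatim.

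\textbf{Finiteness of $\mathcal N(X/S)$.} This uses the hypothesis that $S$ is quasi-projective over a field. First, $\mathcal N(X/S)$ is torsion free, since it is defined as a quotient of $\K(\Dperf(X))$ by the kernel of the fibrewise Euler pairings \cite[Proposition and Definition~21.5]{BLM+21}. For finite generation, the approach is to stratify $S$ into finitely many locally closed strata over which $\pi$ is as nice as possible, for instance having constant numerical invariants after a finite base change. A class that is numerically trivial on the fibres over one chosen point of each stratum is then uniformly numerically trivial. Hence $\mathcal N(X/S)$ embeds into a finite product of groups $\K_{\mathrm{num}}(X_s)$ after field extension. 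Each of these has finite rank, so $\mathcal N(X/S)$ is finitely generated torsion free, hence free of finite rank. Where possible I would simply quote the corresponding statement from \cite{BLM+21} instead of rebuilding this argument.

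\textbf{Uniform mass-Hom bound.} I would proceed in two steps.
\begin{enumerate}
\item Verify the bound for the explicitly constructed stability conditions of \cite[Theorem~1.1]{LLL+}, which are built from the relative polarization $H_{X/S}$. The fibrewise argument of \cite[Theorem~1.3]{LLL+} bounds $\dim\operatorname{Hom}(E,F)$ in terms of the masses, with constants that depend only on numerical data of $(X_s,H_s)$: degree, dimension, and regularity bounds. Since $\pi$ is flat and projective and $S$ is quasi-compact, these data are bounded uniformly in $s$ (uniform Castelnuovo--Mumford regularity, constant Hilbert polynomial of $\cO_{X_s}$ on connected components). Hence the constants can be chosen independently of $s$.
\item Propagate the bound across the component. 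If $\tau$ is sufficiently close to $\sigma$ in the relative topology, then $m_\tau(E)\le C\,m_\sigma(E)$ for all $E$ in all fibres, with $C$ depending only on the distance. So a uniform mass-Hom bound for $\sigma$ yields one for $\tau$ with modified constants. Connectedness of the component then gives the bound everywhere.
\end{enumerate}

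The main obstacle is step~(1): checking that every estimate in the proof of \cite[Theorem~1.3]{LLL+} depends only on bounded numerical data of the fibres and not on fibre-specific choices. To handle this I would rerun that proof in the relative setting of \cite[Setup~8.1]{LLL+}. At each use of Serre vanishing or regularity, I would substitute the uniform relative statement over the quasi-compact base.
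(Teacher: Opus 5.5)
\textbf{What matches.} Your top-level reduction is the paper's. Corollary~\ref{cor:relative-full-support} follows from Theorem~\ref{thm:relative-mass-hom-support} once three things hold:
\begin{enumerate}
\item the central charge factors through $\mathcal N(X/S)$, which holds by construction in \cite[Section~10.2]{LLL+};
\item $\mathcal N(X/S)$ is finite-rank free, for which the paper simply cites \cite[Proposition and Definition~21.5]{BLM+21}, so your stratification sketch is unnecessary;
\item every point of the component has a uniform mass-Hom bound.
\end{enumerate}
A minor correction: $\mathcal N(X/S)$ is a quotient of $\K_{\mathrm{num}}(X/S)$ (classes of objects on fibres) by the right kernel of $\chi$, not a quotient of $\K_0(\Dperf(X))$. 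Torsion-freeness is unaffected. Your step~(2) is also the paper's: a uniform comparison $m_{\sigma_s}\le C_{\sigma,\tau}\,m_{\tau_s}$ for nearby relative stability conditions, obtained from \cite[Theorem~22.2 and Lemma~22.3]{BLM+21} together with \cite[Remark~7.3]{LLL+}.

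\textbf{The gap.} It is step~(1), which is the entire content of Proposition~\ref{prop:uniform-mass-hom}. A mass-Hom constant depends on the perfect object, so ``constants depending only on degree, dimension and regularity of $(X_s,H_s)$'' is not the right statement. What must be shown is that for each fixed $A\in\Dperf(X)$ on the total space, one $C_A$ works for all restrictions $A_s$. This uniformity does not come from bounded numerical invariants of the fibres, nor from uniform Serre vanishing. It has to be checked in the specific ingredients of the argument of \cite[Theorem~7.5]{LLL+}, which the paper identifies as follows.
\begin{enumerate}
\item[(a)] \emph{Base case on $(\PP^1_S)^n$.} After a fixed $G\in\widetilde{\operatorname{GL}}_2^+(\mathbb R)$, $\sigma_{a,b_0}$ is algebraic on every fibre, with heart generated by the $(L_I)_s$, where $L_I=\cO(I)[n-|I|]$. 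Their central charges are independent of $s$ and lie in $\mathbb H\cup\mathbb R_{<0}$. Compactness then gives $\sum n_I\le c\,|\sum n_I Z_G(L_I)|$, and hence an $s$-independent bound on $\dim\operatorname{Hom}((L_I)_s,F)$ for stable $F$.
\item[(b)] \emph{The genuinely relative point.} A general $A\in\Dperf((\PP^1_S)^n)$ is a fixed finite construction from objects $\pi^*P_k\otimes L_{I_k}$ with $P_k\in\Dperf(S)$, not from the $L_I$ alone. So one needs $\dim H^*((P_k)_s)$ to be bounded in $s$, which uses quasi-compactness of $S$.
\item[(c)] \emph{From stable to arbitrary $F$.} This uses \cite[Lemma~2.8]{HLR}, applied fibrewise with the same constants.
\item[(d)] \emph{Transfer.} The bound passes through the pullback/pushforward constructions of \cite[Proposition~10.2 and Theorem~10.3]{LLL+} via \cite[Lemma~7.4]{LLL+}, applied to fixed perfect generators on the total spaces. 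Your step~(2) must also be invoked for the deformations inside the construction (e.g.\ \cite[Proposition~10.1]{LLL+}), not only across the component.
\end{enumerate}
Without (a) and (b) in particular, your step~(1) is an assertion rather than an argument.
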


Finally, the full numerical support property allows us to compare the distinguished components associated with different polarizations.
Let $X$ be a projective scheme over a field, and write $\Stab_{\mathrm{num}}(\Db(X))$ for the space of stability conditions on $\Db(X)$ which satisfy the support property with respect to $\K_{\mathrm{num}}(X)$.
For $H\in\operatorname{Amp}(X)_{\mathbb Q}$, let $\Stab_{\mathrm{num},H}^\dagger(\Db(X))$ denote the connected component of $\Stab_{\mathrm{num}}(\Db(X))$ containing $\Stab_H^\dagger(\Db(X))$.
Our final result shows that this component is independent of $H$.

\begin{theoremE}
[\textnormal{= Theorem~\ref{thm:polarization-independent}}]
For any $H_0,H_1\in\operatorname{Amp}(X)_{\mathbb Q}$, we have
\begin{equation}
\Stab_{\mathrm{num},H_0}^\dagger(\Db(X))
=
\Stab_{\mathrm{num},H_1}^\dagger(\Db(X)).
\end{equation}
\end{theoremE}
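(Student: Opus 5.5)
The plan is to show that the two connected components share a point, and then to use that connected components of $\Stab_{\mathrm{num}}(\Db(X))$ are either disjoint or equal. Since $\operatorname{Amp}(X)_{\mathbb Q}$ is convex, I take the segment $H_t=(1-t)H_0+tH_1$, $t\in[0,1]\cap\mathbb Q$, which lies entirely in the ample cone. By Corollary~B, each $\Stab_H^\dagger(\Db(X))$ is contained in $\Stab_{\mathrm{num}}(\Db(X))$. So each $\Stab_{\mathrm{num},H_t}^\dagger(\Db(X))$ is a well-defined connected component of $\Stab_{\mathrm{num}}(\Db(X))$. It is therefore enough to prove that $t\mapsto \Stab_{\mathrm{num},H_t}^\dagger$ is locally constant on $[0,1]\cap\mathbb Q$, and to conclude by a chain argument over the compact interval.

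The key local step is as follows. Fix $t_0$. I want a neighbourhood of $t_0$ such that, for $t$ in it, some stability condition $\sigma_t\in\Stab^\dagger_{H_t}$ lies in the same component as some $\sigma_{t_0}\in\Stab^\dagger_{H_{t_0}}$. The construction in \cite{LLL+} produces explicit stability conditions attached to a polarization, for instance via iterated tilting with a central charge polynomial in $H$, depending on auxiliary parameters. I would take such a $\sigma_{t_0}$ and vary the polarization. On the level of central charges, $Z_{H_t}\to Z_{H_{t_0}}$ in $\operatorname{Hom}(\K_{\mathrm{num}}(X),\mathbb C)$ as $t\to t_0$. By Bridgeland's deformation theorem, which now applies in $\Stab_{\mathrm{num}}$ thanks to the full support property, there is a unique lift $\tilde\sigma_t$ near $\sigma_{t_0}$ with central charge $Z_{H_t}$, and $\tilde\sigma_t$ lies in the component of $\sigma_{t_0}$. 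It remains to identify $\tilde\sigma_t$ with $\sigma_t$, or at least to show $\tilde\sigma_t\in\Stab^\dagger_{H_t}$. For this I would compare hearts: the heart of $\sigma_t$ is built from slope stability with respect to $H_t$. I would argue, for example via the defining property of $\Stab^\dagger_H$ in \cite{LLL+} (a "geometric" condition such as skyscraper sheaves or their analogues being stable of fixed phase), that $\tilde\sigma_t$ satisfies the characterization of the distinguished component for $H_t$.

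The identification just described is the main obstacle. The issue is that the distinguished component is only known to be a component of $\Stab_H$ (support with respect to a lattice depending on $H$), not of $\Stab_{\mathrm{num}}$. I would first try the following: $\Stab_{H_t}^\dagger$ is open in $\Stab_{\mathrm{num}}$, because the forgetful map is a local homeomorphism on central charges. Also, $\sigma_t$ depends continuously on $t$ in the Bridgeland metric, which I would check by bounding the differences of phases of Harder–Narasimhan factors using the mass-Hom bound uniformly in $t$. Continuity then gives $\sigma_t\to\sigma_{t_0}$, so $\sigma_t$ lies in the same component as $\sigma_{t_0}$ for $t$ close to $t_0$. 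Failing that, I would use the uniform mass-Hom bound in the relative setting, viewing the family of polarizations as a base $S$, together with Theorem~C to obtain uniform support constants.

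Local constancy on $[0,1]\cap\mathbb Q$ does not by itself give constancy, since this set is totally disconnected. I would therefore show that the neighbourhoods obtained have size bounded below on compact subintervals. Alternatively, I would define the construction for real ample classes $H_t$, $t\in[0,1]$, which also yield stability conditions in $\Stab_{\mathrm{num}}$ by the same mass-Hom argument. Connectedness of $[0,1]$ then finishes the proof.
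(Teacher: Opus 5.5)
There is a genuine gap: your argument needs a family of stability conditions on $X$ that depends continuously on the polarization, and nothing in the proposal produces one.

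The points of $\Stab^\dagger_H(\Db(X))$ are not built by tilting on $X$ with a central charge polynomial in $H$; no such construction is available on an arbitrary projective scheme. They are $\iota^\sharp\sigma_a^{\PP^N}$ for an embedding $\iota\colon X\hookrightarrow\PP^N$ by a very ample multiple $mH$ and $a\gg0$, with central charge $F\mapsto Z_a^{\PP^N}(\iota_*F)$. For rational $t$ near $t_0$, the multiple $m_t$ (typically unbounded), the target $\PP^{N_t}$ and the threshold for $a$ all jump. So there is no sense in which $\sigma_t\to\sigma_{t_0}$, and no well-defined target $Z_{H_t}$ close to $Z_{H_{t_0}}$ to deform to.

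A mass-Hom bound cannot supply this continuity. It bounds $\dim\operatorname{Hom}(A,F)$ by mass for a single stability condition and says nothing about the distance between the slicings of two different ones. Identifying a Bridgeland deformation $\tilde\sigma_t$ with a point of $\Stab^\dagger_{H_t}$ would also need a concrete comparison object plus a uniqueness statement such as \cite[Lemma~6.7]{LLL+}, which you do not set up.

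Your fallbacks also fail:
\begin{enumerate}
\item $\Stab^\dagger_H$ is in general not open in $\Stab_{\mathrm{num}}(\Db(X))$. Its central charges factor through $v\colon\K_{\mathrm{num}}(X)\to\Lambda$, so they lie in a proper linear subspace of $\operatorname{Hom}(\K_{\mathrm{num}}(X),\mathbb C)$ whenever $v$ is not injective over $\mathbb R$. Meanwhile $Z$ is a local homeomorphism on $\Stab_{\mathrm{num}}$.
\item Theorem~\ref{thm:relative-mass-hom-support} concerns flat families $X\to S$, not a family of polarizations on a fixed $X$.
\item The passage from $[0,1]\cap\mathbb Q$ to $[0,1]$ is left unresolved.
\end{enumerate}

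The missing idea is to realize the whole segment of polarizations on one fixed ambient space, where a continuous family does exist.
\begin{enumerate}
\item The paper picks $n$ so that $H_-=H_0+\frac{H_0-H_1}{n-1}$ and $H_+=H_1+\frac{H_1-H_0}{n-1}$ are ample. It embeds $\iota\colon X\hookrightarrow\PP^r\times\PP^s$ by $mH_-$ and $mH_+$.
\item With $(\lambda_t,\mu_t)=((1-t)n+t,\,(1-t)+tn)$ and real $t\in[0,1]$, one has $\iota^*(\lambda_t\xi+\mu_t\eta)=m(n+1)\bigl((1-t)H_0+tH_1\bigr)$.
\item On the product, the block-weighted stability conditions $\sigma_a^{(\lambda,\mu)}$ of Proposition~\ref{prop:product-projective-spaces} vary continuously in $(\lambda,\mu)$. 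For $a\gg0$, their Bayer properties with respect to the finitely many $\cO(p,q)$ in a resolution of $\iota_*\cO_X$ hold uniformly on the compact path. So $\iota^\sharp$ can be applied along the entire real path, and the rational/real issue disappears.
\item The endpoints are matched with $(\jmath_i\circ\iota)^\sharp\sigma_a^{\PP^{N_i}}\in\Stab^\dagger_{H_i}(\Db(X))$ through Lemma~\ref{lem:segre-veronese-compatibility}. That lemma is exactly where a uniqueness statement of the above type is used.
\item The whole restricted path lies in $\Stab_{\mathrm{num}}(\Db(X))$. The mass-Hom bound propagates along the ambient path \cite[Remark~7.3]{LLL+} and transfers along the finite map $\iota$ \cite[Lemma~7.4(1)]{LLL+}, after which Theorem~\ref{thm:mass-hom-implies-support} applies.
\end{enumerate}
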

\noindent
We therefore define $\Stab_{\mathrm{num}}^\dagger(\Db(X))$ as the connected component containing all $\Stab_H^\dagger(\Db(X))$.

\bigskip
\noindent
\textbf{Acknowledgements.}
I am grateful to Emanuele Macrì for his very helpful comments and suggestions. The author was supported by the Royal Society through the University Research Fellowship URF/R1/231191.


\section{Absolute case}\label{sec:absolute}

Throughout this section, let $X$ be a projective scheme over a field $\mathbbm{k}$.
We write $\Db(X)$ for the bounded derived category of coherent sheaves on $X$ and $\Dperf(X)$ for the category of perfect complexes on $X$.
Following \cite[Definition~2.7]{HLR}, we say that a numerical pre-stability condition $\sigma=(Z,\mathcal P)$ admits a \emph{mass-Hom bound} if, for every $A\in\Dperf(X)$, there exists a constant $C_A>0$ such that
\begin{equation}\label{eq:mass-hom}
\dim_{\mathbbm{k}}\operatorname{Hom}(A,F)\leq C_A\,m_\sigma(F)
\end{equation}
for every $F\in\Db(X)$, where $m_\sigma(F)$ denotes the mass of $F$ with respect to $\sigma$.

\begin{theorem}\label{thm:mass-hom-implies-support}
Let $X$ be a projective scheme over a field $\mathbbm{k}$, and let $\sigma=(Z,\mathcal P)$ be a numerical pre-stability condition on $\Db(X)$.
Suppose that $\sigma$ admits a mass-Hom bound.
Then $\sigma$ satisfies the support property with respect to the full numerical Grothendieck group $\K_{\mathrm{num}}(X)$.
\end{theorem}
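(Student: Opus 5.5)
Recall that the support property with respect to $\K_{\mathrm{num}}(X)$ asks for a norm $\|\cdot\|$ on $\K_{\mathrm{num}}(X)\otimes\mathbb R$ and a constant $C>0$ with $\|v(E)\|\leq C\,|Z(E)|$ for every $\sigma$-semistable $E$. The plan is to produce such a bound from finitely many linear functionals that together span the dual of $\K_{\mathrm{num}}(X)\otimes\mathbb R$. By definition, $\K_{\mathrm{num}}(X)$ is the quotient of $\K(\Db(X))$ by the kernel of the Euler pairing against perfect complexes, $\chi(A,F)=\sum_i(-1)^i\dim_{\mathbbm k}\operatorname{Hom}(A,F[i])$ for $A\in\Dperf(X)$. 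Since $\K_{\mathrm{num}}(X)$ is a finite-rank free abelian group (this is part of $\sigma$ being numerical), the functionals $\chi(A,-)$ span its real dual. So we can choose perfect complexes $A_1,\dots,A_n$ such that $v\mapsto(\chi(A_j,v))_j$ is injective on $\K_{\mathrm{num}}(X)\otimes\mathbb R$. Then $\|v\|:=\sum_j|\chi(A_j,v)|$ is a norm, and all norms on this space are equivalent.

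It then suffices to bound $|\chi(A_j,E)|$ by a multiple of $|Z(E)|$ for semistable $E$. We have
\[
|\chi(A_j,E)|\leq\sum_{i\in\mathbb Z}\dim\operatorname{Hom}(A_j,E[i]),
\]
and this sum is finite because $A_j$ is perfect and $\Db(X)$ has finite-dimensional Hom spaces. The naive next step is to apply the mass-Hom bound to each $E[i]$: since $m_\sigma(E[i])=|Z(E)|$ when $E$ is semistable, each term is at most $C_{A_j}|Z(E)|$. The problem is that there are infinitely many shifts $i$, so we need a uniform bound on the number of nonzero terms.

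This is the main obstacle, and I would handle it by feeding a single object into the mass-Hom bound. Set $F=\bigoplus_{i\in I}E[i]$ for a finite set $I$ of shifts. Additivity of mass gives $m_\sigma(F)=|I|\cdot|Z(E)|$, and the mass-Hom bound gives $\sum_{i\in I}\dim\operatorname{Hom}(A_j,E[i])\leq C_{A_j}|I|\,|Z(E)|$. That does not help on its own, so instead I would bound the number of nonzero $i$ uniformly. Since $A_j$ is perfect, $\operatorname{Hom}(A_j,G)=0$ for $G\in\mathcal P(>\phi_+)$ or $G\in\mathcal P(<\phi_-)$, for some range depending only on $A_j$. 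To see this, note that $A_j$ has a finite Harder--Narasimhan filtration with phases in $[a,b]$, so Hom vanishes into $\mathcal P(>b)$. For the lower end I would use Serre-type vanishing: a perfect complex has finite Tor-amplitude, so $\operatorname{Hom}(A_j,-)$ kills $\mathcal P(<\phi)$ for $\phi\ll0$. This requires comparing the slicing with the standard t-structure, namely that $\mathcal P$ has bounded width relative to $\mathrm{Coh}(X)$. I would try to extract that comparison from the mass-Hom bound itself applied to ample line bundles. Granting it, if $E$ is semistable of phase $\phi$ then only shifts $i$ with $\phi+i\in[\phi_-,\phi_+]$ contribute, and there are at most $\phi_+-\phi_-+1$ of them.

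Combining these steps gives $|\chi(A_j,E)|\leq(\phi_+-\phi_-+1)\,C_{A_j}|Z(E)|$. Summing over $j$ yields $\|v(E)\|\leq C|Z(E)|$ for all semistable $E$, which is the support property with respect to $\K_{\mathrm{num}}(X)$. The finiteness of the range of contributing shifts, uniformly in $E$, is the step I expect to need the most care.
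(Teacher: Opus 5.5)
There is a genuine gap in the step that bounds the range of contributing shifts: you have reversed the direction of Hom-vanishing in a slicing. The axiom is $\operatorname{Hom}(\mathcal P(\phi_1),\mathcal P(\phi_2))=0$ for $\phi_1>\phi_2$. So the HN filtration of $A_j$, with phases in $[a,b]$, gives $\operatorname{Hom}(A_j,G)=0$ for $G\in\mathcal P(<a)$, \emph{not} for $G\in\mathcal P(>b)$. Maps into objects of larger phase are not constrained by the slicing. For instance, if $A$ is stable perfect of phase $b$, then $\operatorname{Hom}(A,A[k])=\operatorname{Ext}^k(A,A)$ with $A[k]\in\mathcal P(b+k)$, and this need not vanish. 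Consequently, the end you call the lower end is the trivial one: $\operatorname{Hom}(A_j,E[i])\neq0$ already forces $\phi+i\geq\phi^-(A_j)$. The end that actually needs an idea is the upper one, namely $\operatorname{Hom}(A_j,E[i])=0$ for $i\gg0$, uniformly in semistable $E$ with phase in $(0,1]$, and your sketch gives no valid argument for it. Your fallback, bounded width of $\mathcal P$ relative to $\operatorname{Coh}(X)$, would control both ends through finite Tor-amplitude if it were available. But you only grant it, and the mass-Hom bound yields dimension estimates rather than vanishing, so it is unclear how it would produce such a comparison.

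The missing ingredient is Grothendieck duality for $X\to\operatorname{Spec}\mathbbm{k}$ (Serre duality if $X$ is smooth). Since $A_j$ is perfect, $A_j\otimes\omega_X^\bullet\in\Db(X)$, and
\[
\operatorname{Hom}(A_j,E[i])^\vee\simeq\operatorname{Hom}\bigl(E,A_j\otimes\omega_X^\bullet[-i]\bigr).
\]
The easy direction of the slicing vanishing applies to the right-hand side: it is zero unless $\phi\leq\phi^+(A_j\otimes\omega_X^\bullet)-i$. Together with $\phi+i\geq\phi^-(A_j)$ and $\phi\in(0,1]$, this confines $i$ to the finite window $[\phi^-(A_j)-1,\phi^+(A_j\otimes\omega_X^\bullet))$, which depends only on $A_j$. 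Applying the mass-Hom bound to the finitely many objects $A_j[-i]$ then gives $|\chi(A_j,E)|\leq\widetilde C|Z(E)|$, and the rest of your argument goes through unchanged. This is exactly the paper's proof, and it needs no comparison with the standard t-structure. (Minor point: finite rank of $\K_{\mathrm{num}}(X)$ is a theorem, \cite[Lemma~12.7]{BLM+21}, not part of the definition of a numerical pre-stability condition.)
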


Here, we set $\K_0(X)\coloneqq\K_0(\Db(X))$ and denote
\begin{equation}
\K_{\mathrm{num}}(X)=\K_0(X)/\ker\chi,
\end{equation}
where
\begin{equation}
\chi\colon\K_0(\Dperf(X))\times\K_0(X)\longrightarrow\mathbb Z,\qquad
\chi(A,F)=\sum_{j\in\mathbb Z}(-1)^j
\dim_{\mathbbm{k}}\operatorname{Hom}(A,F[j])
\end{equation}
is the Euler pairing and $\ker\chi$ denotes its right kernel.
By \cite[Lemma~12.7]{BLM+21}, $\K_{\mathrm{num}}(X)$ is a finite-rank free abelian group.
Throughout this section, we regard the central charge as a homomorphism
\begin{equation}
Z\colon\K_{\mathrm{num}}(X)\longrightarrow\mathbb C.
\end{equation}
In the notation of \cite{LLL+}, this is the composition of the central charge on some chosen lattice $\Lambda$ with the map $\K_{\mathrm{num}}(X)\xrightarrow{v}\Lambda$.

\begin{proof}[Proof of Theorem~\ref{thm:mass-hom-implies-support}]
By definition of $\K_{\mathrm{num}}(X)$, the functionals $u\mapsto\chi(A,u)$, for $A\in\Dperf(X)$, separate points of $\K_{\mathrm{num}}(X)_{\mathbb R}$.
We may therefore choose $A_1,\ldots,A_N\in\Dperf(X)$ such that
\begin{equation}
u\longmapsto\bigl(\chi(A_1,u),\ldots,\chi(A_N,u)\bigr)
\end{equation}
is injective.
Hence
\begin{equation}
\|u\|\coloneqq \left(\sum_{i=1}^N\chi(A_i,u)^2\right)^{1/2}
\end{equation}
defines a norm on $\K_{\mathrm{num}}(X)_{\mathbb R}$.

We claim that there exists $C>0$ such that
\begin{equation}\label{eq:support-norm}
\|[F]\|\leq C|Z(F)|
\end{equation}
for every $\sigma$-semistable object $F$.
Since both $\|[F]\|$ and $|Z(F)|$ are invariant under shifts, we may assume that $F\in\mathcal P(\phi)$ for some $0<\phi\leq1$.

For any fixed $A_i\in\{A_1,\ldots,A_N\}$,
if $\operatorname{Hom}(A_i,F[j])\neq0$, then the nonvanishing implies
\begin{equation}\label{eq:absolute-left}
j\geq\phi^-(A_i)-1.
\end{equation}
On the other hand,
let $\omega_X^\bullet$ be the dualizing complex on $X$.
Since $A_i$ is perfect and $\omega_X^\bullet$ has bounded coherent cohomology, we have $A_i\otimes\omega_X^\bullet\in\Db(X)$.
Grothendieck duality for $X\to\operatorname{Spec}(\mathbbm{k})$ (the usual Serre duality when $X$ is smooth) gives
\begin{equation}
\operatorname{Hom}(A_i,F[j])^\vee
\simeq
\operatorname{Hom}\bigl(F,A_i\otimes\omega_X^\bullet[-j]\bigr);
\end{equation}
see \cite[Tag~0AU3]{StacksProject}.
Thus nonvanishing also implies
\begin{equation}\label{eq:absolute-right}
j<\phi^+\bigl(A_i\otimes\omega_X^\bullet\bigr).
\end{equation}
It follows from \eqref{eq:absolute-left} and \eqref{eq:absolute-right} that only the integers in the finite set
\begin{equation}
J_{A_i}\coloneqq 
\left[\phi^-(A_i)-1,\,
\phi^+\bigl(A_i\otimes\omega_X^\bullet\bigr)\right)
\cap\mathbb Z
\end{equation}
can contribute to $\chi(A_i,F)$.
Therefore, by the mass-Hom bound, we have
\begin{equation}
|\chi(A_i,F)|
\leq
\sum_{j\in J_{A_i}}\dim_{\mathbbm{k}}\operatorname{Hom}(A_i[-j],F)
\leq
\widetilde C_{A_i}\,|Z(F)|,
\qquad \text{where }
\widetilde C_{A_i}\coloneqq \sum_{j\in J_{A_i}}C_{A_i[-j]},
\end{equation}
hence
\begin{equation}
\|[F]\|^2=\sum_{i=1}^N\chi(A_i,F)^2\leq C^2|Z(F)|^2,
\qquad \text{where }
C^2\coloneqq \sum_{i=1}^N\widetilde C_{A_i}^{\,2},
\end{equation}
which proves \eqref{eq:support-norm}.

Equivalently, the quadratic form
\begin{equation}
Q(u)\coloneqq C^2|Z(u)|^2-\|u\|^2
\end{equation}
is nonnegative on classes of $\sigma$-semistable objects and negative definite on $\ker Z$.
Thus $\sigma$ satisfies the support property; cf.~\cite[Lemma~A.4]{BMS16}.
\end{proof}

\begin{remark}
The statement of Theorem~\ref{thm:mass-hom-implies-support} in fact holds for any proper scheme $X$ over a field.
Indeed, the proof only requires that $\K_{\mathrm{num}}(X)$ be a finite-rank free abelian group, and this follows in the proper case from Chow's lemma together with the d\'evissage argument in the proof of \cite[Lemma~12.7]{BLM+21}.
\end{remark}

Following \cite{LLL+}, for any ample numerical class $H$ on $X$, we let $\operatorname{Stab}^{\dagger}_{H}(\Db(X))$ denote the distinguished component containing the stability conditions induced from those near the large volume limit on $\PP^n$ via a projective embedding $X\hookrightarrow\PP^n$ with hyperplane class $mH$ for some $m>0$.
By \cite[Theorem~7.5]{LLL+}, every stability condition in this component admits a mass-Hom bound.
Combining this with Theorem~\ref{thm:mass-hom-implies-support}, we obtain the following corollary.

\begin{corollary}\label{cor:absolute-full-support}
Every stability condition in $\operatorname{Stab}^{\dagger}_{H}(\Db(X))$ satisfies the support property with respect to the full numerical Grothendieck group $\K_{\mathrm{num}}(X)$.
\end{corollary}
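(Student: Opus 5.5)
The argument is a direct combination of two inputs. First, \cite[Theorem~7.5]{LLL+} says that every stability condition $\sigma\in\Stab^{\dagger}_{H}(\Db(X))$ admits a mass-Hom bound in the sense of \eqref{eq:mass-hom}. Second, Theorem~\ref{thm:mass-hom-implies-support} converts such a bound into the support property with respect to $\K_{\mathrm{num}}(X)$. So the plan is: fix $\sigma=(Z,\mathcal P)$ in the component, check that it satisfies the hypotheses of Theorem~\ref{thm:mass-hom-implies-support}, and apply it.

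\textbf{Hypothesis 1: $\sigma$ is numerical.} The component $\Stab^{\dagger}_{H}(\Db(X))$ is built in \cite{LLL+} with respect to a lattice $\Lambda$ and a map $v\colon \K_0(X)\to\Lambda$ factoring through $\K_{\mathrm{num}}(X)$. Hence the central charge $Z$ factors through $\K_{\mathrm{num}}(X)$, so $\sigma$ is a numerical pre-stability condition. It also satisfies the support property with respect to $\Lambda$, though that plays no role here.

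\textbf{Hypothesis 2: the mass-Hom bound.} This is exactly \cite[Theorem~7.5]{LLL+}.

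The only point needing care is that the two notions of mass-Hom bound agree. The definition in \cite{LLL+} (following \cite[Definition~2.7]{HLR}) uses test objects $A\in\Dperf(X)$ and the mass $m_\sigma$, which is how the bound is stated in \eqref{eq:mass-hom}. If \cite{LLL+} instead phrases it only for a generating set of perfect objects, such as the twists $\mathcal O_X(k)$, then I would extend it to all of $\Dperf(X)$ as follows.

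\begin{itemize}
\item \textbf{Shifts.} Since $m_\sigma(F[j])=m_\sigma(F)$, replacing $A$ by $A[-j]$ is harmless.
\item \textbf{Cones.} For a triangle $A'\to A\to A''$, the long exact sequence for $\operatorname{Hom}(-,F)$ gives
\[
\dim\operatorname{Hom}(A,F)\le \dim\operatorname{Hom}(A',F)+\dim\operatorname{Hom}(A'',F).
\]
Constants therefore add under extensions.
\item \textbf{Direct summands.} The dimension only drops when passing to a summand, so the bound is inherited.
\end{itemize}

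Since $\Dperf(X)$ is the thick closure of $\{\mathcal O_X(k)\}$ for the ample line bundle, these three steps give the bound for every perfect $A$.

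With both hypotheses in hand, Theorem~\ref{thm:mass-hom-implies-support} yields the support property with respect to $\K_{\mathrm{num}}(X)$, as claimed.
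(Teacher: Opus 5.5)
Your proposal is correct and follows the paper's argument: the corollary is obtained by combining \cite[Theorem~7.5]{LLL+}, which gives a mass-Hom bound on the whole distinguished component, with Theorem~\ref{thm:mass-hom-implies-support}, using that the central charge factors through $\K_{\mathrm{num}}(X)\to\Lambda$. Your fallback extension of the bound from the twists $\mathcal O_X(k)$ to all of $\Dperf(X)$ via shifts, cones and summands is sound but not needed, since the paper takes the bound from \cite{LLL+} directly in the form \eqref{eq:mass-hom} for all perfect test objects.
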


\section{Relative case}\label{sec:relative}

Throughout this section, let $\pi\colon X\to S$ be a flat projective morphism, where $S$ is a noetherian Nagata scheme of finite Krull dimension which is quasi-projective over a noetherian affine scheme, as in \cite[Setup~8.1]{LLL+}.
We use the relative numerical Grothendieck group $\K_{\mathrm{num}}(X/S)$ of \cite[Definition~8.7]{LLL+}.
For $A\in\Dperf(X)$ and $s\in S$, we write $A_s$ for its restriction to $X_s$.
Following \cite[Proposition and Definition~21.5]{BLM+21}, we consider the relative Euler pairing
\begin{equation}
\chi\colon\K_0(\Dperf(X))\times\K_{\mathrm{num}}(X/S)
\longrightarrow\mathbb Z,\qquad
\chi(A,F)\coloneqq \chi(A_s,F)
\end{equation}
for $F\in\Db(X_s)$, and its uniformly numerical quotient
\begin{equation}
\mathcal N(X/S)=\K_{\mathrm{num}}(X/S)/\ker\chi,
\end{equation}
where $\ker\chi$ denotes the right kernel.
By \cite[Proposition and Definition~21.5]{BLM+21}, $\mathcal N(X/S)$ is a finite-rank free abelian group whenever $S$ is quasi-projective over a field.

Whenever the central charge of a relative stability condition factors through $\mathcal N(X/S)$, by abuse of notation we regard it as a homomorphism
\begin{equation}
Z\colon\mathcal N(X/S)\longrightarrow\mathbb C.
\end{equation}

\begin{definition}\label{def:uniform-mass-hom}
Let $\sigma=(\sigma_s)_{s\in S}$ be a stability condition on $\Db(X)$ over $S$ in the sense of \cite[Definition~8.17]{LLL+}.
We say that $\sigma$ admits a \emph{uniform mass-Hom bound} if, for every $A\in\Dperf(X)$, there exists $C_A>0$ such that
\begin{equation}\label{eq:uniform-mass-hom}
\dim_{\kappa(s)}\operatorname{Hom}(A_s,F)
\leq C_A\,m_{\sigma_s}(F)
\end{equation}
for every $s\in S$ and every $F\in\Db(X_s)$.
\end{definition}

\begin{theorem}\label{thm:relative-mass-hom-support}
Let $\sigma$ be a stability condition on $\Db(X)$ over $S$ whose central charge factors through $\mathcal N(X/S)$.
Assume that $\mathcal N(X/S)$ is a finite-rank free abelian group.
If $\sigma$ admits a uniform mass-Hom bound, then it satisfies the support property with respect to $\mathcal N(X/S)$ uniformly over $S$.
\end{theorem}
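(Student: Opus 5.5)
The plan is to run the proof of Theorem~\ref{thm:mass-hom-implies-support} fiberwise, with every constant chosen independently of $s\in S$. Since $\mathcal N(X/S)$ is a finite-rank free abelian group and is defined as the quotient of $\K_{\mathrm{num}}(X/S)$ by the right kernel of the relative Euler pairing, the functionals $u\mapsto\chi(A,u)$ for $A\in\Dperf(X)$ separate points of $\mathcal N(X/S)_{\mathbb R}$. We therefore fix finitely many $A_1,\ldots,A_N\in\Dperf(X)$ with $u\mapsto(\chi(A_i,u))_i$ injective, and set
\begin{equation}
\|u\|\coloneqq\left(\sum_{i=1}^N\chi(A_i,u)^2\right)^{1/2}.
\end{equation}
This is a norm on $\mathcal N(X/S)_{\mathbb R}$. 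The key feature is that the $A_i$ live on the total space $X$, so one norm serves all fibers. The goal is a single $C>0$ with
\begin{equation}
\|[F]\|\leq C|Z(F)|
\end{equation}
for every $s\in S$ and every $\sigma_s$-semistable $F\in\Db(X_s)$. Once this holds, the quadratic form $Q=C^2|Z|^2-\|\cdot\|^2$ is nonnegative on all semistable classes in all fibers and negative definite on $\ker Z$. This gives the support property uniformly over $S$ in the sense of \cite{LLL+}, via \cite[Lemma~A.4]{BMS16}.

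To prove the inequality, shift $F$ into $\mathcal P_s(\phi)$ with $0<\phi\leq1$. By the definition of the relative pairing, $\chi(A_i,[F])=\chi(A_{i,s},F)$. Two things must be bounded. The first is $\dim\operatorname{Hom}(A_{i,s}[-j],F)$, which the uniform mass-Hom bound controls by $C_{A_i[-j]}\,|Z(F)|$, since $m_{\sigma_s}(F)=|Z(F)|$ for semistable $F$. The second is the set of $j$ that can contribute, which must be finite and independent of $s$.

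The lower bound on $j$ comes from $j\geq\phi^-_{\sigma_s}(A_{i,s})-1$. The upper bound comes from Grothendieck duality on $X_s$:
\begin{equation}
\operatorname{Hom}(A_{i,s},F[j])^\vee\simeq\operatorname{Hom}(F,A_{i,s}\otimes\omega^\bullet_{X_s}[-j]),
\end{equation}
which forces $j<\phi^+_{\sigma_s}(A_{i,s}\otimes\omega^\bullet_{X_s})$. The remaining task is to bound the phases $\phi^\pm_{\sigma_s}(A_{i,s})$ and $\phi^\pm_{\sigma_s}(A_{i,s}\otimes\omega_{X_s}^\bullet)$ uniformly in $s$.

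This uniform phase bound is the main obstacle. The first thing to try is the defining properties of a stability condition over $S$ in \cite[Definition~8.17]{LLL+}. Their HN structure is locally constant in families, or at least universally bounded: for a fixed object of $\Dperf(X)$ or $\Db(X)$ on the total space, there is a Harder--Narasimhan filtration over $S$ (or over a stratification of $S$) whose restrictions give the fiberwise filtrations. Since $S$ is noetherian, finitely many strata suffice, so the phases of the restrictions take only finitely many values. For the dual side we use the relative dualizing complex: $\omega^\bullet_{X_s}\simeq L\iota_s^*\omega^\bullet_{X/S}$, up to a shift by a line bundle on $S$ that is locally trivial. So $A_{i,s}\otimes\omega^\bullet_{X_s}$ is the restriction of the fixed object $A_i\otimes\omega^\bullet_{X/S}$, and the same finiteness applies.

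If \cite{LLL+} does not directly provide uniform phase bounds, there is a fallback that uses only the uniform mass-Hom bound. Apply it to the finitely many objects $A_i[-j]$ for a suitable window of $j$. Then control the tails by Serre-type vanishing for fixed perfect complexes against objects of the heart in a bounded range, using relative $\pi$-ampleness so that this range is uniform in $s$. Either way, we obtain finite sets $J_{A_i}$ independent of $s$. Summing as in the absolute case then gives $C^2=\sum_i\bigl(\sum_{j\in J_{A_i}}C_{A_i[-j]}\bigr)^2$.
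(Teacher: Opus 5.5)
Your proposal is correct and follows the paper's proof essentially step for step. It uses the same separating norm built from finitely many $A_i\in\Dperf(X)$, the same window of contributing $j$ (obtained from $\phi^-$ and from Grothendieck duality via $A_i\otimes\omega^\bullet_{X/S}$), and the same constant $C^2=\sum_i\bigl(\sum_{j\in J_{A_i}}C_{A_i[-j]}\bigr)^2$. Your stratification argument for the uniform phase bounds is exactly what the paper invokes as \cite[Lemma~8.21]{LLL+}, namely constructibility of phase functions of $S$-perfect objects, applied to $A_i$ and to $A_i\otimes\omega^\bullet_{X/S}$ (which is $S$-perfect because $\omega^\bullet_{X/S}$ is, and restricts on the nose to $\omega^\bullet_{X_s}$), so your fallback argument is not needed.
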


\begin{proof}
By definition of $\mathcal N(X/S)$, the functionals $u\mapsto\chi(A,u)$, for $A\in\Dperf(X)$, separate points of $\mathcal N(X/S)_{\mathbb R}$.
We may therefore choose $A_1,\ldots,A_N\in\Dperf(X)$ such that
\begin{equation}
u\longmapsto\bigl(\chi(A_1,u),\ldots,\chi(A_N,u)\bigr)
\end{equation}
is injective.
Hence
\begin{equation}
\|u\|\coloneqq \left(\sum_{i=1}^N\chi(A_i,u)^2\right)^{1/2}
\end{equation}
defines a norm on $\mathcal N(X/S)_{\mathbb R}$.

We claim that there exists $C>0$, independent of $s$, such that
\begin{equation}\label{eq:relative-support-norm}
\|[F]\|\leq C|Z(F)|
\end{equation}
for every $s\in S$ and every $\sigma_s$-semistable object $F$.
Since both $\|[F]\|$ and $|Z(F)|$ are invariant under shifts, we may assume that $F\in\Db(X_s)$ is $\sigma_s$-semistable of phase in $(0,1]$.

Let $\omega^\bullet_{X/S}$ be the relative dualizing complex, which is $S$-perfect by \cite[Tag~0E2Z]{StacksProject}.
For any fixed $A_i\in\{A_1,\ldots,A_N\}$, both $A_i$ and $A_i\otimes\omega^\bullet_{X/S}$ are $S$-perfect.
By \cite[Lemma~8.21]{LLL+}, the corresponding phase functions are constructible and semicontinuous.
Since $S$ is quasi-compact, there exist $a_{A_i},b_{A_i}\in\mathbb R$, independent of $s$, such that
\begin{equation}
a_{A_i}\leq\phi^-_{\sigma_s}((A_i)_s),\qquad
\phi^+_{\sigma_s}
\bigl((A_i\otimes\omega^\bullet_{X/S})_s\bigr)\leq b_{A_i}
\end{equation}
for every $s\in S$.
If $\operatorname{Hom}((A_i)_s,F[j])\neq0$, then
\begin{equation}\label{eq:relative-left}
j\geq a_{A_i}-1.
\end{equation}
On the other hand, by base change for the relative dualizing complex \cite[Tag~0E4P]{StacksProject} and fiberwise Grothendieck duality \cite[Tag~0AU3]{StacksProject},
\begin{equation}
\operatorname{Hom}((A_i)_s,F[j])^\vee
\simeq
\operatorname{Hom}
\bigl(F,(A_i\otimes\omega^\bullet_{X/S})_s[-j]\bigr).
\end{equation}
Thus nonvanishing also implies
\begin{equation}\label{eq:relative-right}
j<b_{A_i}.
\end{equation}
It follows from \eqref{eq:relative-left} and \eqref{eq:relative-right} that only the integers in the finite set
\begin{equation}
J_{A_i}\coloneqq 
[a_{A_i}-1,b_{A_i})\cap\mathbb Z
\end{equation}
can contribute to $\chi(A_i,F)$, independently of $s$.
Therefore, by the uniform mass-Hom bound, we have
\begin{equation}
|\chi(A_i,F)|
\leq
\sum_{j\in J_{A_i}}
\dim_{\kappa(s)}\operatorname{Hom}((A_i)_s[-j],F)
\leq
\widetilde C_{A_i}\,|Z(F)|,
\qquad \text{where }
\widetilde C_{A_i}\coloneqq \sum_{j\in J_{A_i}}C_{A_i[-j]},
\end{equation}
hence
\begin{equation}
\|[F]\|^2
=
\sum_{i=1}^N\chi(A_i,F)^2
\leq C^2|Z(F)|^2,
\qquad \text{where }
C^2\coloneqq \sum_{i=1}^N\widetilde C_{A_i}^{\,2},
\end{equation}
which proves \eqref{eq:relative-support-norm} uniformly over $S$.

Equivalently, the quadratic form
\begin{equation}
Q(u)\coloneqq C^2|Z(u)|^2-\|u\|^2
\end{equation}
is nonnegative on all fiberwise semistable classes and negative definite on $\ker Z$.
Thus $\sigma$ satisfies the support property with respect to $\mathcal N(X/S)$ uniformly over $S$; cf.~\cite[Lemma~A.4]{BMS16}.
\end{proof}

\begin{remark}\label{rem:proper-relative}
The argument of Theorem~\ref{thm:relative-mass-hom-support} in fact applies to flat proper families, provided that the relevant relative numerical lattice is a finite-rank free abelian group and that the phases of $S$-perfect objects admit uniform bounds over the base.
\end{remark}

Following \cite{LLL+}, for any relatively ample numerical class $H_{X/S}$, we let $\operatorname{Stab}^{\dagger}_{H_{X/S}}(\Db(X)/S)$ denote the distinguished component of \cite[Section~10.2]{LLL+}, containing the explicit stability conditions $\widetilde\sigma_{a,b}^{X/S}$ for $a\gg0$.
We now establish a uniform relative version of the mass-Hom bound in \cite[Theorem~7.5]{LLL+}.

\begin{proposition}\label{prop:uniform-mass-hom}
Every stability condition in $\operatorname{Stab}^{\dagger}_{H_{X/S}}(\Db(X)/S)$ admits a uniform mass-Hom bound.
\end{proposition}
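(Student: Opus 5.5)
The plan is to follow the structure of the proof of the absolute mass-Hom bound \cite[Theorem~7.5]{LLL+}, while tracking every constant so that it is independent of $s\in S$. There are two stages. First we establish the uniform bound for the explicit stability conditions $\widetilde\sigma_{a,b}^{X/S}$ with $a\gg0$. Then we propagate it along paths in $\operatorname{Stab}^{\dagger}_{H_{X/S}}(\Db(X)/S)$ using uniform deformation estimates for the mass.

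\emph{Step 1: reduction to line bundles.} Fix a relatively ample line bundle $\mathcal O(1)$ representing a multiple of $H_{X/S}$. Since $S$ is quasi-projective over a noetherian affine scheme, every $A\in\Dperf(X)$ is built from finitely many shifts of $\mathcal O(-n)$ by cones, uniformly in $s$. Restricting to $X_s$ gives the same finite recipe for $A_s$. The function $A\mapsto \dim\operatorname{Hom}(A_s,F)$ is subadditive along triangles, up to summing over neighbouring shifts. Hence it suffices to prove \eqref{eq:uniform-mass-hom} for $A=\mathcal O(-n)[k]$, with $n,k$ ranging over finite sets.

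\emph{Step 2: the explicit stability conditions.} For $A=\mathcal O(-n)[k]$ we have $\operatorname{Hom}(A_s,F)=H^{-k}(X_s,F(n))$. The bound $\dim H^{-k}(X_s,F(n))\le C\,m_{\sigma_s}(F)$ is checked first on $\sigma_s$-semistable $F$ and then summed over the HN factors. The fiberwise argument of \cite[Theorem~7.5]{LLL+} produces such an estimate. It is inherited from the projective space $\PP^n_S$ via the embedding and pushforward, together with a Castelnuovo--Mumford regularity bound for semistable objects near the large volume limit. The constants in that argument depend only on the numerical data (Hilbert polynomial, degree of the embedding, $a,b$). Because $\pi$ is flat and projective, these data are locally constant on $S$, and $S$ is quasi-compact. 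So the fiberwise constants can be taken uniform, which gives the bound for $\widetilde\sigma_{a,b}^{X/S}$.

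\emph{Step 3: propagation.} Let $\sigma,\tau$ be close in the relative stability manifold, in the sense of the uniform metric of \cite[Section~8]{LLL+}, say with fiberwise distance $<\epsilon$ for every $s$. Then the masses satisfy $m_{\tau_s}(F)\ge c\,m_{\sigma_s}(F)$ with $c=c(\epsilon)>0$ independent of $s$. This follows from the standard estimate $m_{\tau}(F)\ge \cos(\pi\epsilon)\,e^{-\epsilon}\, m_\sigma(F)$ applied fiberwise. Consequently a uniform mass-Hom bound for $\sigma$ gives one for $\tau$, with $C_A$ multiplied by $c^{-1}$. Connectedness of the component then implies that the set of $\sigma$ admitting a uniform bound is open, and also closed by the same estimate applied in reverse. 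Since this set contains $\widetilde\sigma_{a,b}^{X/S}$, it is the whole component.

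\emph{Main obstacle.} I expect the hardest point to be Step 2: making the regularity and vanishing bounds of \cite[Theorem~7.5]{LLL+} uniform in $s$. In particular one must check that the thresholds there (such as $a\gg0$ and the twist $n$ needed for vanishing) depend only on numerical invariants. My first attempt would be to run the argument on $\PP^N_S$, where the bounds are visibly independent of the base, and then pull back along the closed embedding $X\hookrightarrow\PP^N_S$.
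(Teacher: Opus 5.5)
\textbf{Verdict: there is a genuine gap in Step~2, the base case.} Your architecture is right, and matches the paper's: prove the bound for explicit stability conditions, then propagate it along the component. Your Step~3 is essentially the paper's propagation step, using a comparison $m_{\tau_s}(F)\geq c\,m_{\sigma_s}(F)$ with $c$ independent of $s$. The $s$-independence of $c$ comes from the support property being uniform over $S$, via \cite[Theorem~22.2 and Lemma~22.3]{BLM+21}. Transferring a bound from an ambient relative projective space to $X$ by adjunction along the embedding is also harmless.

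\textbf{Why Step~2 does not work as written.} All of the uniformity has to come from Step~2, and there you only assert it. The absolute theorem \cite[Theorem~7.5]{LLL+} gives, for each $s$, some constant $C_{A,s}$. Nothing says the optimal constant is constructible or semicontinuous in $s$, and the fibers live over different residue fields. So quasi-compactness of $S$ and local constancy of Hilbert polynomials do not bound $\sup_s C_{A,s}$. The claim that on $\PP^N_S$ the bounds are ``visibly independent of the base'' has the same problem. The large-volume stability conditions on $\PP^N_{\kappa(s)}$ come from an indirect construction (elliptic curves, finite quotients), and their hearts have no explicit description that is the same on every fiber. A Castelnuovo--Mumford regularity argument is not what is used. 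It is also unclear how it would give a bound linear in the mass for all semistable objects of all classes at once.

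\textbf{The missing idea.} You need a point of the component whose heart has a description literally the same on every fiber. The paper finds one on $(\PP^1_S)^n$. For $(a,b_0)$ as in \cite[Proposition~10.1]{LLL+}, there is a fixed $G\in\widetilde{\operatorname{GL}}_2^+(\mathbb R)$ such that on every fiber $G\cdot\sigma_{a,b_0}$ is algebraic. Its heart is the extension closure of the restrictions of the relative exceptional objects $L_I=\cO(I)[n-|I|]$. This gives two uniform facts:
\begin{enumerate}
\item The values $Z_G(L_I)\in\mathbb H\cup\mathbb R_{<0}$ do not depend on $s$, so the length $\sum_J n_J$ of a stable object in the heart is at most a fixed multiple of its mass.
\item The total dimension $\sum_\ell\dim\operatorname{Hom}((L_I)_s,(L_J)_s[\ell])$ does not depend on the field.
\end{enumerate}
Together these bound $\dim\operatorname{Hom}((L_I)_s,F)$ uniformly for stable $F$. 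An arbitrary perfect $A$ is then built from objects $\pi^*P_k\otimes L_{I_k}$, where $P_k\in\Dperf(S)$ has uniformly bounded fiber cohomology. Then \cite[Lemma~2.8]{HLR} passes from stable to arbitrary $F$. Only after this base case do deformation (your Step~3) and pushforward/pullback carry the bound to $\widetilde\sigma^{X/S}_{a,b}$ and to the whole component. The pushforward/pullback step uses \cite[Proposition~10.2 and Theorem~10.3]{LLL+}, with the adjunction estimates of \cite[Lemma~7.4]{LLL+} applied fiberwise.

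\textbf{A smaller fixable point in Step~1.} Twists $\cO(-n)$ of a relatively ample bundle do not generate $\Dperf(X)$ unless $S$ is affine. You need an ample bundle on $X$, such as $\cO(1)\otimes\pi^*\mathcal M^{\otimes m}$ with $\mathcal M$ ample on $S$ and $m\gg0$; it still restricts to $\cO_{X_s}(1)$ on fibers. Alternatively, use generators $\pi^*P\otimes\cO(-n)$ with $P\in\Dperf(S)$. In either case you must allow direct summands, not just cones.
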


\begin{proof}
We show that the argument of \cite[Theorem~7.5]{LLL+} can be made uniform over the base.

First, uniform mass-Hom bounds are preserved by the pullback and pushforward constructions used in \cite[Proposition~10.2 and Theorem~10.3]{LLL+}.
Indeed, the adjunction estimates of \cite[Lemma~7.4]{LLL+} apply fiberwise to fixed perfect generators on the total spaces, with constants independent of $s$.
Moreover, any fixed finite construction from these generators by shifts, cones, and direct summands restricts to the same construction on every fiber.
Thus the argument of \cite[Remark~7.2]{LLL+} is uniform over $S$.

Second, uniform mass-Hom bounds are preserved along connected components of the relative stability space.
For sufficiently close relative stability conditions $\sigma$ and $\tau$, the deformation estimates in \cite[Theorem~22.2 and Lemma~22.3]{BLM+21}, together with the mass-comparison argument of \cite[Remark~7.3]{LLL+}, give a constant $C_{\sigma,\tau}>0$, independent of $s$, such that
\begin{equation}\label{eq:uniform-mass-comparison}
m_{\sigma_s}(F)\leq C_{\sigma,\tau}\,m_{\tau_s}(F)
\end{equation}
for every $s\in S$ and $F\in\Db(X_s)$.
Hence the uniform mass-Hom bound is preserved under small deformations; since the relative stability space is locally path connected by \cite[Theorem~8.19]{LLL+}, a finite covering of a path proves the claim for each connected component.

It remains to find one relative stability condition with a uniform mass-Hom bound on $(\PP^1_S)^n$.
Choose $(a,b_0)$ as in the proof of \cite[Proposition~10.1]{LLL+}.
There is $G\in\widetilde{\operatorname{GL}}_2^+(\mathbb R)$, independent of $s$, such that $G\cdot\sigma_{a,b_0}^{(\PP^1_s)^n}$ is algebraic for every $s$.
Let $Z_G$ denote the resulting central charge and, for $I\subset\{1,\ldots,n\}$, set
\[
L_I\coloneqq \mathcal O_{(\PP^1_S)^n}(I)[n-|I|].
\]
By \cite[Section~5.1]{LLL+}, the corresponding algebraic heart is the extension closure of the $(L_I)_s$.
The values $Z_G((L_I)_s)$ are independent of $s$; we write them as $Z_G(L_I)$.
Since $Z_G(L_I)\in\mathbb H\cup\mathbb R_{<0}$ for every $I$, the continuous function $(x_I)_I\longmapsto \left|\sum_I x_I Z_G(L_I)\right|$ has a positive minimum on the compact convex hull $x_I\geq0$, $\sum_Ix_I=1$.
Hence there is $c>0$ such that
\begin{equation}\label{eq:algebraic-length-mass}
\sum_I n_I
\leq
c\left|\sum_I n_I Z_G(L_I)\right|
\end{equation}
holds for every choice of $n_I\geq0$.

For each $I$ and $J$, the quantity
\[
\sum_{\ell\in\mathbb Z}
\dim_{\kappa(s)}
\operatorname{Hom}\bigl((L_I)_s,(L_J)_s[\ell]\bigr)
\]
is finite and independent of $s$ by the standard cohomology of line bundles on $(\PP^1_{\kappa(s)})^n$.
For each $I$, set
\[
c_I\coloneqq 
\max_J
\sum_{\ell\in\mathbb Z}
\dim_{\kappa(s)}
\operatorname{Hom}\bigl((L_I)_s,(L_J)_s[\ell]\bigr).
\]
If $F=F_0[q]$ is stable for $G\cdot\sigma_{a,b_0}^{(\PP^1_s)^n}$, with $F_0$ in the algebraic heart, choose a finite filtration of $F_0$ with factors among the $(L_J)_s$, and let $n_J$ be their multiplicities.
Then $Z_G(F_0)=\sum_J n_J Z_G(L_J)$ and
\[
m_{G\cdot\sigma_{a,b_0}^{(\PP^1_s)^n}}(F)=|Z_G(F_0)|.
\]
Subadditivity of total graded Hom dimension together with \eqref{eq:algebraic-length-mass} gives
\begin{equation}\label{eq:generator-uniform-bound}
\dim_{\kappa(s)}
\operatorname{Hom}\bigl((L_I)_s,F\bigr)
\leq
c_I\sum_J n_J
\leq
c\,c_I\,
m_{G\cdot\sigma_{a,b_0}^{(\PP^1_s)^n}}(F).
\end{equation}

Now fix $A\in\Dperf((\PP^1_S)^n)$ and write $\pi\colon(\PP^1_S)^n\to S$ for the projection.
The relative full exceptional collection shows that $A$ can be obtained, by a fixed finite construction using shifts, cones, and direct summands, from objects $\pi^*P_k\otimes L_{I_k}$ with $P_k\in\Dperf(S)$.
Since $P_k$ is perfect and $S$ is quasi-compact, the total dimension of $H^*((P_k)_s)$ is bounded independently of $s$; over $\kappa(s)$, $(P_k)_s$ splits as the direct sum of its cohomology groups in the corresponding degrees.
Hence \eqref{eq:generator-uniform-bound} and the fixed construction of $A$ give a constant $C_A>0$, independent of $s$, such that
\[
\dim_{\kappa(s)}\operatorname{Hom}(A_s,F)
\leq
C_A\,m_{G\cdot\sigma_{a,b_0}^{(\PP^1_s)^n}}(F)
\]
for every $s$ and every stable $F$.
Applying the argument of \cite[Lemma~2.8]{HLR} fiberwise with the same constants, the same holds for arbitrary $F$.
Thus $G\cdot\sigma_{a,b_0}^{(\PP^1_S)^n}$ admits a uniform mass-Hom bound.

Since $G$ is fixed, the masses before and after applying $G$ are uniformly comparable, so $\sigma_{a,b_0}^{(\PP^1_S)^n}$ also admits a uniform mass-Hom bound.
By \cite[Proposition~10.1]{LLL+} and the connected-component argument above, every $\sigma_{a,b}^{(\PP^1_S)^n}$ admits a uniform mass-Hom bound.
By pullback and pushforward, together with \cite[Proposition~10.2 and Theorem~10.3]{LLL+}, the stability conditions $\widetilde\sigma_{a,b}^{X/S}$ for $a\gg0$ admit a uniform mass-Hom bound.
The connected-component argument then proves the claim.
\end{proof}

\begin{corollary}\label{cor:relative-full-support}
Assume that $\mathcal N(X/S)$ is a finite-rank free abelian group.
Then every stability condition in $\operatorname{Stab}^{\dagger}_{H_{X/S}}(\Db(X)/S)$ satisfies the support property with respect to $\mathcal N(X/S)$ uniformly over $S$.
In particular, the conclusion holds whenever $S$ is quasi-projective over a field.
\end{corollary}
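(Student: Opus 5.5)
The corollary will follow by combining Proposition~\ref{prop:uniform-mass-hom} with Theorem~\ref{thm:relative-mass-hom-support}; the only remaining work is to check that the hypotheses of the theorem hold for every $\sigma\in\operatorname{Stab}^{\dagger}_{H_{X/S}}(\Db(X)/S)$.

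The first step is to fix $\sigma$ in the distinguished component. Proposition~\ref{prop:uniform-mass-hom} says directly that $\sigma$ admits a uniform mass-Hom bound in the sense of Definition~\ref{def:uniform-mass-hom}.

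The second step, which I expect to be the only real point to check, is that the central charge of $\sigma$ factors through $\mathcal N(X/S)$. A relative stability condition in the sense of \cite[Definition~8.17]{LLL+} has a central charge factoring through $\K_{\mathrm{num}}(X/S)$, via a lattice $\Lambda$. I would argue in two parts.

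\begin{enumerate}
\item For the explicit stability conditions $\widetilde\sigma_{a,b}^{X/S}$, the central charge is built from Euler characteristics $\chi(A_s,-)$ against fixed perfect objects on $X$ (twists by powers of $H_{X/S}$, pulled back and pushed forward from $(\PP^1_S)^n$). Hence it kills the right kernel of $\chi$.
\item Inside the component, the central charge varies in $\operatorname{Hom}(\Lambda,\mathbb C)$ via the chosen map $v\colon\K_{\mathrm{num}}(X/S)\to\Lambda$. If $\Lambda$ is taken to be $\mathcal N(X/S)$ itself, or a quotient of it, then factorization through $\mathcal N(X/S)$ is preserved along the whole component.
\end{enumerate}

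If the component is instead defined with a smaller lattice, the same conclusion holds as long as $v$ factors through $\mathcal N(X/S)$. I would arrange this by choosing $\Lambda$ to be the image of $\mathcal N(X/S)$ in the appropriate Mukai-type lattice. This is consistent with the standing convention, stated before Theorem~\ref{thm:relative-mass-hom-support}, that central charges are regarded as homomorphisms on $\mathcal N(X/S)$.

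With both hypotheses verified, and $\mathcal N(X/S)$ a finite-rank free abelian group by assumption, Theorem~\ref{thm:relative-mass-hom-support} gives the support property with respect to $\mathcal N(X/S)$, uniformly over $S$. For the final sentence, \cite[Proposition and Definition~21.5]{BLM+21} shows that $\mathcal N(X/S)$ is finite-rank free whenever $S$ is quasi-projective over a field, so the assumption is automatic in that case.
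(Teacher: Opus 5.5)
Your proposal follows the paper's proof: Proposition~\ref{prop:uniform-mass-hom} gives the uniform mass-Hom bound, you check that the central charge factors through $\mathcal N(X/S)$, and Theorem~\ref{thm:relative-mass-hom-support} together with \cite[Proposition and Definition~21.5]{BLM+21} finishes the argument. The one step to tighten is the factorization. The lattice map $v_{H_{X/S}}$ defining the component is fixed by \cite[Section~10.2]{LLL+} and cannot be re-chosen, since changing $\Lambda$ changes the support condition and hence the component. What is needed, and what the paper takes from that construction, is that this given $v_{H_{X/S}}$ kills the right kernel of $\chi$. So apply your Euler-characteristic reasoning from part (1) to $v_{H_{X/S}}$ itself: knowing it only for the central charges of the explicit $\widetilde\sigma_{a,b}^{X/S}$ does not extend to the whole component.
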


\begin{proof}
By Proposition~\ref{prop:uniform-mass-hom}, these stability conditions admit a uniform mass-Hom bound.
By construction in \cite[Section~10.2]{LLL+}, the homomorphism $v_{H_{X/S}}$ factors through $\mathcal N(X/S)$.
The result therefore follows from Theorem~\ref{thm:relative-mass-hom-support}.
The final assertion follows from \cite[Proposition and Definition~21.5]{BLM+21}.
\end{proof}

\section{The distinguished connected component} \label{sec:connected}

We write $\Stab_{\mathrm{num}}(\Db(X))$ for the space of stability conditions on $\Db(X)$ which satisfy the support property with respect to $\K_{\mathrm{num}}(X)$.
For any $H\in\operatorname{Amp}(X)_{\mathbb Q}$, the distinguished component $\Stab_H^\dagger(\Db(X))$ of \cite{LLL+} is contained in $\Stab_{\mathrm{num}}(\Db(X))$, by Corollary~\ref{cor:absolute-full-support}.
We denote by $\Stab_{\mathrm{num},H}^\dagger(\Db(X))$ the connected component of $\Stab_{\mathrm{num}}(\Db(X))$ containing $\Stab_H^\dagger(\Db(X))$.

\begin{definition}
We define
\begin{equation}
\Stab_{\mathrm{num}}^\dagger(\Db(X))\coloneqq \bigcup_{H\in\operatorname{Amp}(X)_{\mathbb Q}}\Stab_{\mathrm{num},H}^\dagger(\Db(X)).
\end{equation}
A priori, this is a union of connected components of $\Stab_{\mathrm{num}}(\Db(X))$.
\end{definition}

\begin{theorem}
\label{thm:polarization-independent}
The space $\Stab_{\mathrm{num}}^\dagger(\Db(X))$ is a single connected component of $\Stab_{\mathrm{num}}(\Db(X))$.
Equivalently, for any $H_0,H_1\in\operatorname{Amp}(X)_{\mathbb Q}$, we have
\begin{equation}
\Stab_{\mathrm{num},H_0}^\dagger(\Db(X))=\Stab_{\mathrm{num},H_1}^\dagger(\Db(X)).
\end{equation}
\end{theorem}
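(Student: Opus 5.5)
The plan is to connect, inside $\Stab_{\mathrm{num}}(\Db(X))$, a point of $\Stab_{H_0}^\dagger(\Db(X))$ to a point of $\Stab_{H_1}^\dagger(\Db(X))$ by a continuous path. Since $\operatorname{Amp}(X)_{\mathbb Q}$ is convex, consider the segment $H_t=(1-t)H_0+tH_1$ for $t\in[0,1]\cap\mathbb Q$. Because each $\Stab_{\mathrm{num},H}^\dagger$ is a connected component, it suffices to prove two things. First, the map $t\mapsto \Stab_{\mathrm{num},H_t}^\dagger$ is locally constant on $[0,1]\cap\mathbb Q$. Second, $[0,1]$ can be covered by finitely many rational intervals on each of which this holds. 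Scaling $H$ by a positive rational does not change the component, since the construction in \cite{LLL+} uses embeddings with hyperplane class $mH$. We may therefore clear denominators and assume that $H_0$ and $H_1$ are very ample integral classes.

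\textbf{Step 1: an explicit family.} The stability conditions in $\Stab_H^\dagger$ are induced from large-volume-limit stability conditions on $\PP^n$ via an embedding $X\hookrightarrow\PP^n$. For two very ample classes, we will use the Segre-type embedding $X\hookrightarrow\PP^{n_0}\times\PP^{n_1}\hookrightarrow\PP^N$ given by $H_0$ and $H_1$. Alternatively, we can use the product construction on $(\PP^1)^n$ of \cite[Section~10]{LLL+}, where the central charge depends on a multi-polarization. The aim is a continuous family of stability conditions $\sigma_{t}$ on $\Db(X)$, each with a mass-Hom bound, whose central charges factor through $\K_{\mathrm{num}}(X)$ via classes like $\mathrm{ch}$ paired with $H_t$. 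We arrange that $\sigma_0\in\Stab_{H_0}^\dagger$ and $\sigma_1\in\Stab_{H_1}^\dagger$. Concretely, on $\PP^{n_0}\times\PP^{n_1}$ with bi-degree polarization $(1-t,t)$, the large volume limit stability conditions deform continuously in $t$. The pullback/pushforward machinery of \cite[Theorem~7.5 and Section~10]{LLL+} then transfers them to $X$, and the mass-Hom bound propagates.

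\textbf{Step 2: support and continuity.} By Theorem~\ref{thm:mass-hom-implies-support}, every $\sigma_t$ lies in $\Stab_{\mathrm{num}}(\Db(X))$. We must check that $t\mapsto\sigma_t$ is continuous in the topology of $\Stab_{\mathrm{num}}$. On $\PP^{n_0}\times\PP^{n_1}$ this follows from Bridgeland's deformation theorem once the support property holds locally uniformly in $t$. Given continuity, the image of $[0,1]$ is connected, so it lies in a single component of $\Stab_{\mathrm{num}}(\Db(X))$. Finally, we must identify the endpoints: $\sigma_0$ should lie in the same component as the stability conditions built from $\PP^{n_0}$ alone. For this we compare, at $t=0$, the induced stability condition from $\PP^{n_0}\times\PP^{n_1}$ with that from $\PP^{n_0}$, using the independence of the distinguished component from the chosen embedding established in \cite{LLL+}.

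\textbf{Main obstacle.} The hardest step is continuity in $t$ together with the endpoint identification. At $t=0$ the bi-polarization degenerates, since it is no longer ample on the product. I would therefore first try to avoid the degenerate endpoints. Instead of running $t$ over all of $[0,1]$, fix small $\epsilon>0$ and use the ample classes $H_t$ with $t\in[\epsilon,1-\epsilon]$. Then show directly that for $t$ near $0$, the distinguished stability condition for $H_t$ lies in a neighbourhood, in $\Stab_{\mathrm{num}}$, of one for $H_0$. The intended mechanism is that both have central charges of the form $-\int e^{-(b+ia)H}\mathrm{ch}$ with $a\gg0$, and these are close in the norm of Theorem~\ref{thm:mass-hom-implies-support}. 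Since the deformation to a nearby central charge is then unique, the two stability conditions are connected by a short path. Local constancy on rational $t$ plus compactness of $[0,1]$ then concludes the proof.
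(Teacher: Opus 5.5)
Your overall architecture matches the paper's: embed $X$ into a product of two projective spaces, move the bi-polarization, restrict to $X$, and use the mass-Hom bound with Theorem~\ref{thm:mass-hom-implies-support} to stay in $\Stab_{\mathrm{num}}(\Db(X))$. However, the step you flag as the main obstacle is not closed by your remedy.

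Closeness of the central charges of a distinguished $\sigma_{H_t}$ and a distinguished $\sigma_{H_0}$ only lets Bridgeland's deformation theorem produce \emph{some} lift $\tau$ of $Z_{H_t}$ near $\sigma_{H_0}$. Uniqueness of that lift holds only among stability conditions whose slicings are already close to that of $\sigma_{H_0}$. Nothing in your argument shows that the independently constructed $\sigma_{H_t}$ is that lift. It comes from a different embedding by $mH_t$, with $m$ depending on the denominators of $t$ and with its own threshold $a\gg0$.

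The central charges are also not literally $-\int e^{-(b+ia)H}\mathrm{ch}$ on $X$. They carry embedding-dependent Todd-type corrections, so ``close'' only makes sense after rescaling and with support constants uniform in $a$. To identify $\tau$ with $\sigma_{H_t}$ you would need a rigidity statement such as \cite[Lemma~6.7]{LLL+}: equal central charge, skyscrapers of phase $1$, and a Bayer-type condition. That requires a real argument and does not follow from closeness. Separately, proving local constancy only at rational $t$ does not cover irrational points in your compactness step, though that is minor.

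The missing idea is to avoid the degenerate endpoints altogether. Since the ample cone is open, the paper extends the segment to ample classes $H_\pm$ lying beyond $H_0$ and $H_1$, and embeds $X$ by $mH_-$ and $mH_+$. It then moves along $(\lambda_t,\mu_t)$ from $(n,1)$ to $(1,n)$. These bi-polarizations are strictly ample on $\PP^r\times\PP^s$ and restrict to multiples of $(1-t)H_0+tH_1$.

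Even then, your plan lacks an ingredient needed at every endpoint, degenerate or not (for instance at your $t=\epsilon$). A restriction such as $\iota^\sharp\sigma_a^{(n,1)}$ is not a priori in $\Stab^\dagger_{H_0}(\Db(X))$. That component is defined through pullbacks along embeddings into a single projective space, and the independence of embedding in \cite{LLL+} does not cover product stability conditions. The paper supplies this with the Segre--Veronese compatibility lemma. It connects $\sigma_a^{(n,1)}$ to $\jmath_0^\sharp\sigma_a^{\PP^{N_0}}$ by a path along which the Bayer conditions persist for all line bundles in a bigraded resolution of $\iota_*\cO_X$, so the path restricts to $X$. It then identifies the endpoint via \cite[Lemma~6.7]{LLL+}.

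Finally, your phrases that the bi-polarized large-volume stability conditions ``deform continuously in $t$'' and that the machinery ``transfers them'' to $X$ hide two nontrivial inputs. The first is the construction of these stability conditions in Proposition~\ref{prop:product-projective-spaces}. The second is the uniform estimate (ii), which is needed to apply \cite[Proposition~3.24]{LLL+} along the whole path.
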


The idea is to embed $X$ into a fixed product $\mathbb P^r\times\mathbb P^s$ so that the segment joining two polarizations is induced by a path of polarizations on the ambient product.
Before proving the theorem, we establish two auxiliary results concerning large-volume stability conditions on products of projective spaces.
We use the following notation throughout.
On $\mathbb P^r\times\mathbb P^s$, set
\begin{equation}
\xi\coloneqq c_1(\mathcal O(1,0)), \qquad \eta\coloneqq c_1(\mathcal O(0,1)),
\end{equation}
where
$\mathcal O(p,q)\coloneqq \operatorname{pr}_1^*\mathcal O_{\mathbb P^r}(p)\otimes\operatorname{pr}_2^*\mathcal O_{\mathbb P^s}(q)$.
For any $u\in\K_{\mathrm{num}}(\mathbb P^r\times\mathbb P^s)_{\mathbb R}$, write
\begin{equation}
\operatorname{ch}(u)=\sum_{i=0}^r\sum_{j=0}^s u_{ij}\xi^i\eta^j.
\end{equation}
For $a,\lambda,\mu>0$, define a linear transformation
\begin{equation}\label{eq:block-scaling}
M_a^{(\lambda,\mu)}(u)\coloneqq \left((a\lambda)^{r-i}(a\mu)^{s-j}u_{ij}\right)_{\substack{0\leq i\leq r\\0\leq j\leq s}}\in\mathbb R^{(r+1)(s+1)}.
\end{equation}
After fixing a Euclidean norm $\|\cdot\|$ on $\mathbb R^{(r+1)(s+1)}$, set
\begin{equation}
\|u\|_a^{(\lambda,\mu)}\coloneqq \left\|M_a^{(\lambda,\mu)}(u)\right\|.
\end{equation}

We first state a product version of the construction of geometric stability conditions on projective spaces in \cite{LLL+}.

\begin{proposition}
\label{prop:product-projective-spaces}
For every $a,\lambda,\mu>0$, there is a stability condition
\begin{equation}
\sigma_a^{(\lambda,\mu)}
=
\bigl(Z_a^{(\lambda,\mu)},\mathcal P_a^{(\lambda,\mu)}\bigr)
\end{equation}
on $\Db(\PP^r\times\PP^s)$ with
\begin{equation}\label{eq:block-central-charge}
Z_a^{(\lambda,\mu)}(F)
=
-\int_{\PP^r\times\PP^s}
e^{-\sqrt{-1}a(\lambda\xi+\mu\eta)}
\operatorname{ch}(F),
\end{equation}
such that skyscraper sheaves have phase $1$.
These stability conditions depend continuously on $(a,\lambda,\mu)$ and satisfy the support property with respect to $\K_{\mathrm{num}}(\PP^r\times\PP^s)$.
Moreover, the following properties hold.
\begin{enumerate}
\item[(i)]
For every fixed $\lambda,\mu>0$, there is a constant $C_{\lambda,\mu}>0$ such that
\begin{equation}
\|u\|_a^{(\lambda,\mu)}
\leq
C_{\lambda,\mu}|Z_a^{(\lambda,\mu)}(u)|
\end{equation}
for every $a\geq1$ and every $\sigma_a^{(\lambda,\mu)}$-semistable class $u$.

\item[(ii)]
For every compact subset $K\subset\mathbb R_{>0}^2$ and every $p,q\in\mathbb Z$, we have
\begin{equation}
\sup_{(\lambda,\mu)\in K}
\operatorname{dist}
\bigl(
\mathcal P_a^{(\lambda,\mu)},
\mathcal P_a^{(\lambda,\mu)}\otimes\mathcal O(p,q)
\bigr)
\longrightarrow0
\end{equation}
as $a\to\infty$.
\end{enumerate}
\end{proposition}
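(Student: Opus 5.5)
The construction is the product analogue of the large-volume construction on $\PP^n$ in \cite{LLL+}. Identify $\PP^r\times\PP^s$ with $\PP^r_S$ for $S=\PP^s$, or argue directly with the Beilinson-type full strong exceptional collection $\{\mathcal O(p,q)\}_{0\le p\le r,\,0\le q\le s}$. The first step is to produce, for one convenient parameter value, an algebraic stability condition whose heart is the extension closure of shifts $\mathcal O(I)[\,\cdot\,]$, exactly as in \cite[Section~5.1]{LLL+} for $(\PP^1)^n$. Then rescale. The central charge \eqref{eq:block-central-charge} depends only on $(a\lambda,a\mu)$. Moreover, $M_a^{(\lambda,\mu)}$ intertwines $Z_a^{(\lambda,\mu)}$ with a fixed form: writing $t=a\lambda$ and $t'=a\mu$, the coefficient of $u_{ij}$ in $Z$ is $(-\sqrt{-1})^{r-i+s-j}t^{r-i}t'^{s-j}$ times a constant, so $Z_a^{(\lambda,\mu)}(u)=Z_1^{(1,1)}\bigl(M_a^{(\lambda,\mu)}u\bigr)$ up to the fixed normalization. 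Guided by this, I would define $\sigma_a^{(\lambda,\mu)}$ as a tilt/rotation of a fixed algebraic stability condition for $a\lambda,a\mu$ in a bounded region. For large $(a\lambda,a\mu)$ I would use the large-volume description: semistable objects are, up to shift, slope-semistable sheaves for the polarization $\lambda\xi+\mu\eta$ and twisted Gieseker refinements. Continuity in $(a,\lambda,\mu)$ follows from Bridgeland's deformation theorem once a support property is known locally uniformly. Skyscrapers have $Z=-1$ and are stable, hence have phase $1$.

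For (i), I would follow the proof of \cite[Theorem~7.5]{LLL+}, mirroring Proposition~\ref{prop:uniform-mass-hom}. First bound $\sum_I n_I\le c\,|Z|$ for objects of the algebraic heart, as in \eqref{eq:algebraic-length-mass}. The point is that after applying $M_a^{(\lambda,\mu)}$, the whole family for fixed $(\lambda,\mu)$ and $a\ge1$ should be governed by a single limiting quadratic form. Concretely, the support inequality $\|u\|\le C|Z(u)|$ on the normalized lattice coordinates $M_a u$ should hold with a constant independent of $a$, because the semistable classes for $\sigma_a^{(\lambda,\mu)}$ are, after rescaling by $M_a$, contained in a cone converging as $a\to\infty$. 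For this convergence I would use a Bogomolov--Gieseker-type inequality on $\PP^r\times\PP^s$ for slope-semistable sheaves, which is $a$-independent after rescaling. Combining this with compactness in $a\in[1,A]$ and the asymptotic cone for $a\ge A$ gives $C_{\lambda,\mu}$. This uniform-in-$a$ estimate is the main obstacle. I would first try to reduce it to the $\PP^n$ case of \cite{LLL+} via the Segre embedding together with Theorem~\ref{thm:mass-hom-implies-support}. If that fails, I would prove directly that the rescaled semistable classes lie in a fixed cone on which $Z_1^{(1,1)}$ is bounded below.

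For (ii), tensoring by $\mathcal O(p,q)$ sends $\sigma_a^{(\lambda,\mu)}$ to a stability condition with central charge $Z(F\otimes\mathcal O(-p,-q))$. This amounts to the $B$-field shift $B=p\xi+q\eta$. In the rescaled coordinates, this shift perturbs $M_a u$ by terms of relative size $O(1/a)$ uniformly for $(\lambda,\mu)\in K$. By (i), the norm $\|Z-Z'\|_\sigma$ therefore tends to $0$ uniformly on $K$. Both stability conditions lie on the same large-volume family, whose stable objects are twisted-Gieseker stable sheaves and are preserved by twisting. Hence the standard lemma that two stability conditions with close central charges and a common set of stable objects are close (Bridgeland's Lemma~6.4 type estimate) gives distance $\to0$ uniformly in $K$.
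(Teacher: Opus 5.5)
There is a genuine gap: the proposal never establishes that $\sigma_a^{(\lambda,\mu)}$ exists, and the arguments for (i) and (ii) rest on that missing construction and on a false premise.

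\textbf{Existence.} Rotating a fixed algebraic stability condition by $\widetilde{\mathrm{GL}}_2^+(\mathbb R)$ only post-composes its central charge with a real-linear automorphism of $\mathbb C$. The coefficients of $Z_a^{(\lambda,\mu)}$ are, up to fixed constants, the monomials $(a\lambda)^{r-i}(a\mu)^{s-j}$, and for $r,s\geq1$ their ratios determine $(a\lambda,a\mu)$. So each such orbit meets your family in at most one parameter value (compare \cite[Proposition~10.1]{LLL+}), and even there you must prove that skyscraper sheaves are stable; you cannot just assert it. Moving off that point with Bridgeland's deformation theorem needs a support form controlled along a path through the whole two-parameter region. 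That control is exactly what is missing, so continuity is unproved too. The ``large-volume description'' cannot fill the gap. That semistable objects are (twisted) Gieseker semistable sheaves is something one proves after the stability conditions exist. A tilting construction with this central charge on $\PP^r\times\PP^s$ would need Bogomolov--Gieseker-type inequalities for all higher Chern characters, which are not available in general.

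The paper instead runs the machinery of \cite{LLL+}. It takes a weighted product stability condition on $E^{r+s}$, with weights $2a\lambda$ and $2a\mu$ on the two blocks, invariant under the block symmetry groups. It pushes this forward to $(\PP^1)^{r+s}$ and descends in two steps via \cite[Proposition~3.21]{LLL+}, first to $\PP^r\times(\PP^1)^s$ and then to $\PP^r\times\PP^s$. Stability of skyscrapers, the support property, arbitrary real parameters and continuity all come from this construction together with blockwise rescaling by multiplication isogenies.

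\textbf{Part (i).} Your observation that $Z_a^{(\lambda,\mu)}$ becomes independent of $(a,\lambda,\mu)$ in the coordinates $M_a^{(\lambda,\mu)}u$ is correct, and it is also the paper's starting point. The rest does not go through. The classical Bogomolov--Gieseker inequality constrains only $\mathrm{ch}_{\leq2}$ of sheaves, whereas $\|u\|_a^{(\lambda,\mu)}$ involves every $u_{ij}$, and $\sigma_a^{(\lambda,\mu)}$-semistable objects are complexes. Your fallback through the Segre embedding and Theorem~\ref{thm:mass-hom-implies-support} gives, for each $a$, a constant that depends on that stability condition's mass-Hom constants and phase windows. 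It is also measured in a fixed norm rather than the $a$-scaled one, so there is no uniformity in $a$. Moreover, the pullback has a different central charge (the $\operatorname{td}(N_\jmath)^{-1}$ correction), and the comparison in Lemma~\ref{lem:segre-veronese-compatibility} uses (i) as input, so the reduction is circular. The paper obtains (i) from the inductive quadratic form of \cite[Lemma~6.6]{LLL+}, which is independent of $a$ in the scaled coordinates.

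\textbf{Part (ii).} Your premise that $\sigma_a^{(\lambda,\mu)}$ and $\sigma_a^{(\lambda,\mu)}\otimes\mathcal O(p,q)$ have a common set of stable objects is false. The latter has $B$-field $p\xi+q\eta$, and at finite $a$ the walls move with $B$. This already happens on $\PP^2$, for twists of ideal sheaves of many points. The twisted Gieseker description holds only class by class, beyond a class-dependent threshold, so it gives no control over the supremum that defines the distance between slicings. Closeness of central charges alone does not bound that distance without an a priori estimate. In any case, (i) is stated for fixed $(\lambda,\mu)$, so it gives no uniformity over $K$. The paper instead builds the family with independent real $B$-field parameters in the two blocks. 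Multiplication isogenies show that twisting by $\mathcal O(p,q)$ amounts to $B$-parameters of order $(a\lambda)^{-1}$ and $(a\mu)^{-1}$. The result then follows by uniform continuity on a compact subset of the deformation space.
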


\begin{proof}
Suppose first that $a\lambda,a\mu\in\mathbb Q_{>0}$.
The proof of \cite[Theorem~4.2]{LLL+}, together with the support-property argument of \cite[Theorem~4.5]{LMPSZ}, extends to factor-dependent positive rational weights.
More precisely, one argues inductively with a positive rational weight assigned to each elliptic-curve factor.
For every choice of an omitted factor, the inductive construction gives a stability condition with the same weighted central charge, while skyscraper sheaves remain stable of phase $1$.
Thus \cite[Theorem~4.1(2)]{LLL+} shows that the resulting stability conditions coincide, and the intersection-of-kernels argument in the proof of \cite[Theorem~4.5]{LMPSZ} gives the support property with respect to the full lattice used there.

Let $E$ be the elliptic curve used in the construction of \cite[Theorem~4.2]{LLL+}, and assign weight $2a\lambda$ to each of the first $r$ factors of $E^{r+s}$ and weight $2a\mu$ to each of the last $s$ factors.
Iterating the product construction gives the corresponding weighted stability condition on $\Db(E^{r+s})$, with skyscraper sheaves stable of phase $1$.
By the same invariance argument as in the proof of \cite[Theorem~5.6]{LLL+}, this stability condition is invariant under
\begin{equation}
\bigl((\mathbb Z/2\mathbb Z)^r\rtimes\mathfrak S_r\bigr)
\times
\bigl((\mathbb Z/2\mathbb Z)^s\rtimes\mathfrak S_s\bigr).
\end{equation}
By \cite[Proposition~5.1]{LLL+}, the resulting stability condition pushes forward along
\begin{equation}
E^{r+s}\longrightarrow(\PP^1)^{r+s}
\end{equation}
to a numerical stability condition whose slicing satisfies the Bayer property with respect to every effective line bundle.
We then apply the proof of \cite[Theorem~5.6]{LLL+} successively to the two blocks.
For the first quotient
\begin{equation}
(\PP^1)^{r+s}\longrightarrow\PP^r\times(\PP^1)^s,
\end{equation}
the filtration property of \cite[Example~3.20(3)]{LLL+} is preserved after taking the product with $(\PP^1)^s$, the required Bayer properties follow from \cite[Proposition~5.1]{LLL+}, and the slicing is $\mathfrak S_r$-invariant by the same argument as in the proof of \cite[Theorem~5.6]{LLL+}.
Thus \cite[Proposition~3.21]{LLL+} gives a stability condition on $\PP^r\times(\PP^1)^s$.
By \cite[Lemma~3.9(1) and Remark~3.14(3)]{LLL+}, the Bayer properties for effective line bundles in the second block are preserved under this pushforward, while the $\mathfrak S_s$-invariance is preserved since the two block actions commute.
Applying \cite[Proposition~3.21]{LLL+} once more to
\begin{equation}
\PP^r\times(\PP^1)^s\longrightarrow\PP^r\times\PP^s
\end{equation}
therefore gives the desired stability condition.
After normalizing by the degree of the composite quotient $E^{r+s}\to\PP^r\times\PP^s$, the central charge takes the form \eqref{eq:block-central-charge}.

The numerical lattice obtained after passing to the two block symmetries has one coordinate for each pair
\begin{equation}
(i,j),\qquad 0\leq i\leq r,\quad 0\leq j\leq s.
\end{equation}
Under the Chern character, these coordinates identify over $\mathbb R$ with the coordinates $u_{ij}$ introduced above.
They therefore separate classes in $\K_{\mathrm{num}}(\PP^r\times\PP^s)_{\mathbb R}$, so the resulting stability condition satisfies the support property with respect to $\K_{\mathrm{num}}(\PP^r\times\PP^s)$.

The extension to arbitrary $a,\lambda,\mu>0$, together with continuity of the resulting family, follows from the same deformation-and-rescaling argument as in \cite[Proposition~4.5]{LLL+}, using multiplication isogenies separately on the two blocks.

To prove (i), note that \eqref{eq:block-central-charge} can be written as
\begin{equation}
Z_a^{(\lambda,\mu)}(u)
=
-\sum_{i=0}^r\sum_{j=0}^s
\frac{(-\sqrt{-1})^{r+s-i-j}}{(r-i)!(s-j)!}
\bigl(M_a^{(\lambda,\mu)}(u)\bigr)_{ij}.
\end{equation}
Thus, in the scaled coordinates determined by $M_a^{(\lambda,\mu)}$, the central charge is independent of $a,\lambda,\mu$.
For fixed $\lambda,\mu$, the same induction as in the proof of \cite[Lemma~6.6]{LLL+}, with the corresponding blockwise scaling, gives a quadratic form in the scaled coordinates which is independent of $a$, nonnegative on the scaled classes of $\sigma_a^{(\lambda,\mu)}$-semistable objects, and negative definite on the kernel of this fixed linear functional.
The same homogeneity and compactness argument therefore gives a constant $C_{\lambda,\mu}>0$, independent of $a$, such that
\begin{equation}
\left\|M_a^{(\lambda,\mu)}(u)\right\|
\leq
C_{\lambda,\mu}|Z_a^{(\lambda,\mu)}(u)|
\end{equation}
for every $\sigma_a^{(\lambda,\mu)}$-semistable class $u$.
This proves (i).

Finally, to prove (ii), the same construction with independent real $B$-field parameters in the two blocks gives a continuous deformation family.
Applying the rescaling argument using multiplication isogenies in the proof of \cite[Theorem~6.8]{LLL+} separately to the two groups of elliptic-curve factors, tensoring by $\mathcal O(p,q)$ produces $B$-field parameters of order $(a\lambda)^{-1}$ and $(a\mu)^{-1}$ in the two blocks.
Since $K\subset\mathbb R_{>0}^2$ is compact, these parameters tend uniformly to zero on $K$, and for $a\gg0$ the corresponding points lie in a fixed compact subset of the deformation space.
Uniform continuity on this compact subset therefore gives (ii).
\end{proof}

\begin{remark}
We have stated Proposition~\ref{prop:product-projective-spaces}(ii) in this asymptotic form in order to keep the argument parallel to the proof of \cite[Theorem~6.8]{LLL+}.
In fact, the argument of \cite[Theorem~2.4]{CF26}, applied separately to the two blocks, gives the stronger quantitative estimate
\begin{equation}
\operatorname{dist}
\bigl(
\mathcal P_a^{(\lambda,\mu)},
\mathcal P_a^{(\lambda,\mu)}\otimes\mathcal O(p,q)
\bigr)
\leq
\frac{1}{\uppi a}
\left(
\frac{r|p|}{\lambda}
+
\frac{s|q|}{\mu}
\right).
\end{equation}
\end{remark}

For later use, we denote by $\sigma_a^{\mathbb P^N}$ the stability condition on $\Db(\mathbb P^N)$ constructed in \cite[Theorem~5.6]{LLL+} with central charge
\begin{equation}
Z_a^{\mathbb P^N}(F)\coloneqq -\int_{\mathbb P^N}e^{-\sqrt{-1}aH}\operatorname{ch}(F),
\end{equation}
where $H$ is the hyperplane class, and for which skyscraper sheaves have phase $1$.

\begin{lemma}[Segre--Veronese compatibility]
\label{lem:segre-veronese-compatibility}
Let $\lambda,\mu$ be positive integers, and let
\begin{equation}
\jmath\colon\mathbb P^r\times\mathbb P^s\hookrightarrow\mathbb P^N
\end{equation}
be the Segre--Veronese embedding defined by $\cO(\lambda,\mu)$.
For $a\gg0$, the stability conditions
\begin{equation}
\sigma_a^{(\lambda,\mu)} \qquad\text{and}\qquad \widetilde{\sigma}_a\coloneqq \jmath^\sharp\sigma_a^{\mathbb P^N}
\end{equation}
lie in the same connected component of $\Stab_{\mathrm{num}}(\Db(\mathbb P^r\times\mathbb P^s))$, where $\jmath^\sharp$ denotes the pullback construction for stability conditions of \cite[Section~3]{LLL+}.

Moreover, given finitely many pairs $(p_1,q_1),\ldots,(p_k,q_k)\in\mathbb Z^2$, for $a\gg0$ these two stability conditions can be joined by a path
\begin{equation}
\sigma_a^t=(Z_a^t,\mathcal P_a^t), \qquad t\in[0,1],
\end{equation}
such that
\begin{equation}\label{eq:compatibility-bayer}
\operatorname{dist}\bigl(\mathcal P_a^t,\mathcal P_a^t\otimes\mathcal O(p_j,q_j)\bigr)<1
\end{equation}
for every $t\in[0,1]$ and every $j=1,\ldots,k$.
\end{lemma}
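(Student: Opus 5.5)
The approach is to compare central charges via Grothendieck--Riemann--Roch, connect $\sigma_a^{(\lambda,\mu)}$ to a stability condition with the central charge of $\widetilde\sigma_a$ by a short deformation, and then identify the endpoint with $\widetilde\sigma_a$ by a uniqueness argument.

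\textbf{Step 1: compare the central charges.} Since $\jmath^*H=\lambda\xi+\mu\eta$, Grothendieck--Riemann--Roch gives $\operatorname{ch}(\jmath_*F)=\jmath_*\bigl(\operatorname{ch}(F)\operatorname{td}(N_\jmath)^{-1}\bigr)$. Hence the central charge of $\widetilde\sigma_a$ is
\begin{equation}
\widetilde Z_a(F)=-\int_{\PP^r\times\PP^s}e^{-\sqrt{-1}a(\lambda\xi+\mu\eta)}\operatorname{ch}(F)\operatorname{td}(N_\jmath)^{-1}.
\end{equation}
Write $\operatorname{td}(N_\jmath)^{-1}=1+T$, where $T$ has positive degree and is a polynomial in $\xi,\eta$. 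Multiplying by $T$ raises the bidegree, so in the scaled coordinates $M_a^{(\lambda,\mu)}(u)$ each such term loses at least one factor of $a$. I therefore expect an estimate
\begin{equation}
|\widetilde Z_a(u)-Z_a^{(\lambda,\mu)}(u)|\leq \frac{C'}{a}\,\|u\|_a^{(\lambda,\mu)},
\end{equation}
with $C'$ independent of $a$. Combined with Proposition~\ref{prop:product-projective-spaces}(i), this gives $|\widetilde Z_a(u)-Z_a^{(\lambda,\mu)}(u)|\leq (C'C_{\lambda,\mu}/a)\,|Z_a^{(\lambda,\mu)}(u)|$ for all $\sigma_a^{(\lambda,\mu)}$-semistable classes $u$.

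\textbf{Step 2: deform.} Take the linear path of central charges $Z_a^t=(1-t)Z_a^{(\lambda,\mu)}+t\widetilde Z_a$. The relative error above is $O(1/a)$, uniformly in $t$. By Bridgeland's deformation theorem in its quantitative form (e.g.\ \cite[Lemma~A.4]{BMS16} together with the support-property constant of Proposition~\ref{prop:product-projective-spaces}(i), which is uniform in $a$), for $a\gg0$ this path lifts uniquely to a path $\sigma_a^t=(Z_a^t,\mathcal P_a^t)$ in $\Stab_{\mathrm{num}}$ starting at $\sigma_a^{(\lambda,\mu)}$. Along this path $\operatorname{dist}(\mathcal P_a^t,\mathcal P_a^{(\lambda,\mu)})=O(1/a)$, and the full numerical support property persists with a uniform quadratic form. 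For any line bundle $L=\cO(p_j,q_j)$, tensoring is an isometry for the slicing distance, so
\begin{equation}
\operatorname{dist}(\mathcal P_a^t,\mathcal P_a^t\otimes L)\leq \operatorname{dist}(\mathcal P_a^{(\lambda,\mu)},\mathcal P_a^{(\lambda,\mu)}\otimes L)+2\operatorname{dist}(\mathcal P_a^t,\mathcal P_a^{(\lambda,\mu)}).
\end{equation}
Proposition~\ref{prop:product-projective-spaces}(ii), applied with $K=\{(\lambda,\mu)\}$, makes the first term small for $a\gg0$, and the second term is $O(1/a)$. This gives \eqref{eq:compatibility-bayer} for the finitely many pairs $(p_j,q_j)$, and I will include $(\lambda,\mu)$ among them.

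\textbf{Step 3: identify the endpoint $\sigma_a^1$ with $\widetilde\sigma_a$.} Both have central charge $\widetilde Z_a$. By Step~2, $\sigma_a^1$ satisfies the Bayer property for $\cO(\lambda,\mu)=\jmath^*\cO(1)$. So does $\widetilde\sigma_a$, by the pullback construction of \cite[Section~3]{LLL+} and the Bayer property of $\sigma_a^{\mathbb P^N}$. Skyscraper sheaves are $\widetilde\sigma_a$-stable of phase $1$. If the same holds for $\sigma_a^1$, then the uniqueness statement \cite[Theorem~4.1(2)]{LLL+} gives $\sigma_a^1=\widetilde\sigma_a$, which proves both assertions of the lemma.

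\textbf{Main obstacle.} The hard part is showing that skyscrapers remain stable of phase $1$ at $t=1$. Because $Z_a^t(k(x))=-1$ for all $t$, it suffices to rule out a wall for the class $[k(x)]$ along the path. I would argue as follows. Any destabilizing subobject of $k(x)$ in $\sigma_a^t$ has phase within $O(1/a)$ of $1$, and by the uniform support property it has scaled norm bounded by a constant times $|Z|\leq 1+O(1/a)$. In the scaled coordinates, the central charge is independent of $a$ and the quadratic form is fixed, so only finitely many scaled classes are possible. A wall would force $Z_a^t$ of such a class to be real and to lie strictly between $0$ and $-1$, and the $O(1/a)$ perturbation should be too small to produce this from the $\sigma_a^{(\lambda,\mu)}$ configuration, where skyscrapers are stable. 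Making this finiteness and compactness argument precise is the step I expect to require the most care. If it resists, the fallback is to verify phase-$1$ stability of skyscrapers directly via the Bayer property and \cite[Lemma~3.9]{LLL+}.
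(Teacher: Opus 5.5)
Your Steps 1 and 2 match the paper's argument. This covers the Grothendieck--Riemann--Roch comparison, the estimate $|Z_a^t(u)-Z_a(u)|\leq\frac{C}{a}\|u\|_a$, the lift of the linear path using the $a$-independent support form $Q_a=C_0^2|Z_a|^2-\|\cdot\|_a^2$, and the triangle inequality for the twisted distances. The genuine gap is the one you flag yourself: nothing in the proposal shows that skyscraper sheaves are $\sigma_a^1$-semistable of phase $1$, and without this the uniqueness step cannot be invoked. Your sketch does not close it. The finite set of scaled classes of bounded norm depends on $a$. The claim that the $O(1/a)$ perturbation is too small to create a wall presupposes a lower bound, uniform in $a$, on the distance from $\sigma_a^{(\lambda,\mu)}$ to the nearest wall for $[\cO_x]$, and you do not have such a bound. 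The fallback via \cite[Lemma~3.9]{LLL+} is not carried out.

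The missing observation is that Step 2 already gives what is needed. Since $(\lambda,\mu)$ is among your pairs, you have $\operatorname{dist}(\mathcal P_a^t,\mathcal P_a^t\otimes\cO(-\lambda,-\mu))<1$. By \cite[Remark~3.18]{LLL+} this is the strict Bayer property $\mathcal P_a^t\prec\mathcal P_a^t\otimes\cO(-\lambda,-\mu)[1]$. The paper then applies \cite[Proposition~3.27]{LLL+}, which turns this into semistability of all skyscraper sheaves for $\sigma_a^1$. Their phase is $1$ because $Z_a^1(\cO_x)=-1$ while $\operatorname{dist}(\mathcal P_a^1,\mathcal P_a)<\frac18$ puts it in $(\frac78,\frac98)$. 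The identification $\sigma_a^1=\widetilde\sigma_a$ is then made via \cite[Lemma~6.7]{LLL+} rather than Theorem~4.1(2). Its hypotheses are the common central charge, phase-$1$ skyscrapers, and the Bayer property, which are exactly the ingredients you assembled.

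If you prefer your wall-crossing route, it can be completed, but by integrality rather than smallness. A Jordan--H\"older factor $u$ of $\cO_x$ at a wall satisfies $0<|Z_a^t(u)|<1$. Then $Q_a(u)\geq0$ together with the interpolation estimate bounds $\|u\|_a$ independently of $a$. Now $u_{ij}\in\frac{1}{r!\,s!}\mathbb Z$, and every coordinate with $(i,j)\neq(r,s)$ is scaled by at least $a\min(\lambda,\mu)$. So for $a\gg0$ the bound forces $u=u_{rs}[\cO_x]$ with $u_{rs}=\chi(\cO,u)\in\mathbb Z$. Hence $Z_a^t(u)=-u_{rs}$, which contradicts $0<|Z_a^t(u)|<1$.
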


\begin{proof}
The argument is a product generalization of the proof of \cite[Theorem~6.8]{LLL+}.
Since $\lambda$ and $\mu$ are fixed throughout the proof, write
\begin{equation}
\sigma_a\coloneqq \sigma_a^{(\lambda,\mu)}=(Z_a,\mathcal P_a), \qquad \|-\|_a\coloneqq \|-\|_a^{(\lambda,\mu)}.
\end{equation}
Thus
\begin{equation}
Z_a(F)=-\int_{\mathbb P^r\times\mathbb P^s}e^{-\sqrt{-1}a(\lambda\xi+\mu\eta)}\operatorname{ch}(F).
\end{equation}
Let $\widetilde Z_a$ be the central charge obtained by restricting $Z_a^{\mathbb P^N}$ along $\jmath$.
By Grothendieck--Riemann--Roch, we have
\begin{equation}
\widetilde Z_a(F)=-\int_{\mathbb P^r\times\mathbb P^s}e^{-\sqrt{-1}a(\lambda\xi+\mu\eta)}\operatorname{ch}(F)\operatorname{td}(N_\jmath)^{-1},
\end{equation}
where $N_\jmath$ is the normal bundle of $\jmath$.
Consider the linear interpolation
\begin{equation}\label{eq:central-charge-interpolation}
Z_a^t\coloneqq (1-t)Z_a+t\widetilde Z_a, \qquad t\in[0,1].
\end{equation}
Since $\operatorname{td}(N_\jmath)^{-1}-1$ is a finite linear combination of monomials $\xi^\alpha\eta^\beta$ with $\alpha+\beta>0$, it is enough to consider one such monomial.
Multiplication by $\xi^\alpha\eta^\beta$ sends the coefficient $u_{ij}$ to the $(i+\alpha,j+\beta)$-coordinate, whose block-scaled coefficient differs by the factor $(a\lambda)^{-\alpha}(a\mu)^{-\beta}$.
Since $\lambda,\mu$ are fixed, this is $O(a^{-1})$.
Hence there is a constant $C>0$, independent of $a$ and $t$, such that
\begin{equation}\label{eq:interpolation-error}
|Z_a^t(u)-Z_a(u)|\leq\frac{C}{a}\|u\|_a
\end{equation}
holds for every $u\in\K_{\mathrm{num}}(\mathbb P^r\times\mathbb P^s)_{\mathbb R}$, $t\in[0,1]$, and $a\geq1$.

By Proposition~\ref{prop:product-projective-spaces}(i), there is a constant $C_0>0$, independent of $a$, such that every $\sigma_a$-semistable class $u$ satisfies
\begin{equation}
\|u\|_a\leq C_0|Z_a(u)|.
\end{equation}
Set
\begin{equation}
Q_a(u)\coloneqq C_0^2|Z_a(u)|^2-\|u\|_a^2.
\end{equation}
Then $Q_a$ is a support form for $\sigma_a$.
If $Z_a^t(u)=0$, then \eqref{eq:interpolation-error} gives
\begin{equation}
Q_a(u)\leq\left(\frac{C_0^2C^2}{a^2}-1\right)\|u\|_a^2.
\end{equation}
Thus, for $a\gg0$, $Q_a$ is negative definite on $\ker Z_a^t$ for every $t\in[0,1]$.
By \cite[Proposition~A.5]{BMS16}, the path \eqref{eq:central-charge-interpolation} therefore lifts from $\sigma_a$ to a path
\begin{equation}
\sigma_a^t=(Z_a^t,\mathcal P_a^t), \qquad t\in[0,1],
\end{equation}
such that every $\sigma_a^t$ satisfies the support property with respect to $Q_a$.
If $Q_a(u)\geq0$, then $\|u\|_a\leq C_0|Z_a(u)|$, so \eqref{eq:interpolation-error} gives
\begin{equation}
|Z_a^t(u)-Z_a(u)|\leq\frac{CC_0}{a}|Z_a(u)|.
\end{equation}
Taking $a$ sufficiently large that $CC_0/a<\sin(\uppi/8)$, the same evaluation argument as in the proof of \cite[Theorem~6.8]{LLL+}, using \cite[Lemma~2.9]{Bayer:2019}, provides
\begin{equation}\label{eq:compatibility-close}
\operatorname{dist}\bigl(\mathcal P_a^t,\mathcal P_a\bigr)<\frac18
\end{equation}
for every $t\in[0,1]$.

By Proposition~\ref{prop:product-projective-spaces}(ii), after increasing $a$ if necessary, we have
\begin{equation}
\operatorname{dist}\bigl(\mathcal P_a,\mathcal P_a\otimes\mathcal O(-\lambda,-\mu)\bigr)<\frac12.
\end{equation}
Since tensoring by a line bundle preserves the distance between slicings, \eqref{eq:compatibility-close} gives
\begin{equation}
\operatorname{dist}\bigl(\mathcal P_a^t,\mathcal P_a^t\otimes\mathcal O(-\lambda,-\mu)\bigr)<\frac34<1.
\end{equation}
Hence \cite[Remark~3.18]{LLL+} gives
\begin{equation}\label{eq:compatibility-strict-bayer}
\mathcal P_a^t\prec\mathcal P_a^t\otimes\mathcal O(-\lambda,-\mu)[1].
\end{equation}

For $a\gg0$, \cite[Theorem~6.2]{LLL+} gives
\begin{equation}
\widetilde{\sigma}_a=(\widetilde Z_a,\widetilde{\mathcal P}_a)=\jmath^\sharp\sigma_a^{\mathbb P^N},
\end{equation}
with skyscraper sheaves of phase $1$ and
\begin{equation}
\widetilde{\mathcal P}_a\prec\widetilde{\mathcal P}_a\otimes\mathcal O(-\lambda,-\mu)[1].
\end{equation}
By \eqref{eq:compatibility-strict-bayer} and \cite[Proposition~3.27]{LLL+}, skyscraper sheaves are $\sigma_a^1$-semistable.
Since $Z_a^1=\widetilde Z_a$ and $\widetilde Z_a(\mathcal O_x)=-1$, while \eqref{eq:compatibility-close} places their $\sigma_a^1$-phase in $(7/8,9/8)$, they have phase $1$.
Thus $\sigma_a^1$ and $\widetilde{\sigma}_a$ satisfy the hypotheses of \cite[Lemma~6.7]{LLL+}, and hence
\begin{equation}
\sigma_a^1=\widetilde{\sigma}_a.
\end{equation}

Finally, since only finitely many pairs $(p_j,q_j)$ occur, Proposition~\ref{prop:product-projective-spaces}(ii) shows that, after increasing $a$ once more,
\begin{equation}
\operatorname{dist}\bigl(\mathcal P_a,\mathcal P_a\otimes\mathcal O(p_j,q_j)\bigr)<\frac12
\end{equation}
for every $j=1,\ldots,k$.
Together with \eqref{eq:compatibility-close}, this gives
\begin{equation}
\operatorname{dist}\bigl(\mathcal P_a^t,\mathcal P_a^t\otimes\mathcal O(p_j,q_j)\bigr)<\frac34<1
\end{equation}
for every $t\in[0,1]$ and every $j=1,\ldots,k$.
This proves \eqref{eq:compatibility-bayer}.
\end{proof}

We now prove the main theorem of this section.

\begin{proof}[Proof of Theorem~\ref{thm:polarization-independent}]
Let $H_0,H_1\in\operatorname{Amp}(X)_{\mathbb Q}$.
We take an integer $n>1$ sufficiently large that the classes
\begin{equation}
H_-\coloneqq H_0+\frac{H_0-H_1}{n-1}, \qquad H_+\coloneqq H_1+\frac{H_1-H_0}{n-1}
\end{equation}
are ample; this is possible since the ample cone is open.
By construction, $H_0$ and $H_1$ are positive combinations of $H_-$ and $H_+$.
For $t\in[0,1]$, set
\begin{equation}
\lambda_t\coloneqq (1-t)n+t, \qquad \mu_t\coloneqq (1-t)+tn.
\end{equation}
Then $\lambda_t,\mu_t\geq1$ and
\begin{equation}
\lambda_tH_-+\mu_tH_+=(n+1)\bigl((1-t)H_0+tH_1\bigr).
\end{equation}
Thus the parameter path
\begin{equation}
K\coloneqq \{(\lambda_t,\mu_t)\mid t\in[0,1]\}
\end{equation}
is a compact subset of $\mathbb R_{>0}^2$.

Choose $m>0$ such that $mH_-$ and $mH_+$ are very ample, and let
\begin{equation}
\iota\colon X\hookrightarrow\mathbb P^r\times\mathbb P^s
\end{equation}
be the resulting product embedding.
Recall that $\xi=c_1(\mathcal O(1,0))$ and $\eta=c_1(\mathcal O(0,1))$ are the hyperplane classes from the first and second factors.
Then $\iota^*\xi=mH_-$ and $\iota^*\eta=mH_+$, so
\begin{equation}
\iota^*(\lambda_t\xi+\mu_t\eta)=m(n+1)\bigl((1-t)H_0+tH_1\bigr).
\end{equation}
By Proposition~\ref{prop:product-projective-spaces}, for every $a>0$ the stability conditions
\begin{equation}
\sigma_a^{(\lambda_t,\mu_t)}, \qquad t\in[0,1],
\end{equation}
form a continuous path in $\Stab_{\mathrm{num}}\bigl(\Db(\mathbb P^r\times\mathbb P^s)\bigr)$.

We now show that this path restricts to a path of stability conditions on $\Db(X)$.
Choose a finite resolution of the form
\begin{equation}\label{eq:resolution}
0\to\bigoplus_j\mathcal O(p_{\nu,j},q_{\nu,j})\to\cdots\to\bigoplus_j\mathcal O(p_{1,j},q_{1,j})\to\mathcal O_{\mathbb P^r\times\mathbb P^s}\to\iota_*\mathcal O_X\to0;
\end{equation}
such a resolution exists, for example, by taking a finite bigraded free resolution over the bihomogeneous coordinate ring of $\mathbb P^r\times\mathbb P^s$ and sheafifying.
This resolution gives $\iota$ the cofiltration property of \cite[Definition~3.22]{LLL+} with the parameter $N=\infty$ used there.
We choose $a\gg0$ so that Proposition~\ref{prop:product-projective-spaces}(ii) applies uniformly over $K$ to all the pairs $(p_{\ell,j},q_{\ell,j})$ occurring in \eqref{eq:resolution}, and so that both applications of Lemma~\ref{lem:segre-veronese-compatibility} below hold for these pairs.
As a consequence,
\begin{equation}
\operatorname{dist}\bigl(\mathcal P_a^{(\lambda_t,\mu_t)},\mathcal P_a^{(\lambda_t,\mu_t)}\otimes\mathcal O(p_{\ell,j},q_{\ell,j})\bigr)<1
\end{equation}
for every $t\in[0,1]$ and every $(\ell,j)$.
Hence \cite[Remark~3.18]{LLL+} gives
\begin{equation}
\mathcal P_a^{(\lambda_t,\mu_t)}\prec\mathcal P_a^{(\lambda_t,\mu_t)}\otimes\mathcal O(p_{\ell,j},q_{\ell,j})[1]\preceq\mathcal P_a^{(\lambda_t,\mu_t)}\otimes\mathcal O(p_{\ell,j},q_{\ell,j})[\ell].
\end{equation}
Thus hypothesis~(i) of \cite[Proposition~3.24]{LLL+} is satisfied, while hypothesis~(ii) is automatic since $N=\infty$.
Therefore the stability conditions
\begin{equation}
\iota^\sharp\sigma_a^{(\lambda_t,\mu_t)}, \qquad t\in[0,1],
\end{equation}
form a continuous path of stability conditions on $\Db(X)$ by \cite[Proposition~3.24 and Lemma~3.5(3)]{LLL+}.

We then identify its endpoints.
At $t=0$, the ambient stability condition is $\sigma_a^{(n,1)}$.
Let
\begin{equation}
\jmath_0\colon\mathbb P^r\times\mathbb P^s\hookrightarrow\mathbb P^{N_0}
\end{equation}
be the Segre--Veronese embedding defined by $\mathcal O(n,1)$.
Since
\begin{equation}\label{eq:first-polarization}
c_1\bigl(\iota^*\mathcal O(n,1)\bigr)=m\bigl(nH_-+H_+\bigr)=m(n+1)H_0,
\end{equation}
the composition $\jmath_0\circ\iota$ is a projective embedding associated with a positive multiple of $H_0$.
Apply Lemma~\ref{lem:segre-veronese-compatibility}, with $(\lambda,\mu)=(\lambda_0,\mu_0)=(n,1)$, to the finitely many pairs $(p_{\ell,j},q_{\ell,j})$ from \eqref{eq:resolution}.
The resulting path satisfies the hypotheses of \cite[Proposition~3.24]{LLL+}, and hence induces a continuous path of stability conditions on $\Db(X)$ from $\iota^\sharp\sigma_a^{(n,1)}$ to
\begin{equation}\label{eq:first-distinguished-endpoint}
(\jmath_0\circ\iota)^\sharp\sigma_a^{\mathbb P^{N_0}}.
\end{equation}
Similarly, at $t=1$, the ambient stability condition is $\sigma_a^{(1,n)}$.
Let
\begin{equation}
\jmath_1\colon\mathbb P^r\times\mathbb P^s\hookrightarrow\mathbb P^{N_1}
\end{equation}
be the Segre--Veronese embedding defined by $\mathcal O(1,n)$.
Since
\begin{equation}\label{eq:second-polarization}
c_1\bigl(\iota^*\mathcal O(1,n)\bigr)=m\bigl(H_-+nH_+\bigr)=m(n+1)H_1,
\end{equation}
the composition $\jmath_1\circ\iota$ is a projective embedding associated with a positive multiple of $H_1$.
The same argument, with $(\lambda,\mu)=(\lambda_1,\mu_1)=(1,n)$, gives a continuous path of stability conditions on $\Db(X)$ from $\iota^\sharp\sigma_a^{(1,n)}$ to
\begin{equation}\label{eq:second-distinguished-endpoint}
(\jmath_1\circ\iota)^\sharp\sigma_a^{\mathbb P^{N_1}}.
\end{equation}

It remains to upgrade the restricted paths to the full numerical stability manifold.
The three paths in $\Stab_{\mathrm{num}}\bigl(\Db(\PP^r\times\PP^s)\bigr)$ constructed above concatenate to a path connecting
\begin{equation}
\jmath_0^\sharp\sigma_a^{\mathbb P^{N_0}}, \qquad \sigma_a^{(n,1)}, \qquad \sigma_a^{(1,n)}, \qquad \jmath_1^\sharp\sigma_a^{\mathbb P^{N_1}}.
\end{equation}
The first stability condition admits a mass-Hom bound by \cite[Theorem~7.5]{LLL+}, and hence every stability condition along this path admits a mass-Hom bound by \cite[Remark~7.3]{LLL+}.
Since $\iota$ is finite, \cite[Lemma~7.4(1)]{LLL+} transfers the mass-Hom bound to all the corresponding restricted stability conditions;
and by adjunction, their central charges factor through $\K_{\mathrm{num}}(X)$ as well.
Hence Theorem~\ref{thm:mass-hom-implies-support} shows that all the restricted paths lie in $\Stab_{\mathrm{num}}(\Db(X))$.

By \eqref{eq:first-polarization} and \eqref{eq:second-polarization}, the endpoints \eqref{eq:first-distinguished-endpoint} and \eqref{eq:second-distinguished-endpoint} lie respectively in $\Stab_{\mathrm{num},H_0}^\dagger(\Db(X))$ and $\Stab_{\mathrm{num},H_1}^\dagger(\Db(X))$.
Since these endpoints are connected by the restricted paths constructed above, the two connected components coincide.
\end{proof}

\begin{remark}
The same argument works in the relative setting.
Thus, for $\pi\colon X\to S$ as in \cite[Setup~8.1]{LLL+}, the distinguished component in the full relative numerical stability space is independent of the choice of relatively ample numerical class, provided that $\mathcal N(X/S)$ is a finite-rank free abelian group.
\end{remark}

\bigskip

\bibliographystyle{alphaurl}
\bibliography{ref}

\end{document}